\newcommand{\Hmm}[1]{\leavevmode{\marginpar{\tiny%
$\hbox to 0mm{\hspace*{-0.5mm}$\leftarrow$\hss}%
\vcenter{\vrule depth 0.1mm height 0.1mm width \the\marginparwidth}%
\hbox to 0mm{\hss$\rightarrow$\hspace*{-0.5mm}}$\\\relax\raggedright #1}}}
\newcommand{\nc}{\newcommand}
\nc{\les}{\lesssim}
\nc{\nit}{\noindent}
\nc{\nn}{\nonumber}
\nc{\D}{\partial}
\nc{\diff}[2]{\frac{d #1}{d #2}}
\nc{\diffn}[3]{\frac{d^{#3} #1}{d {#2}^{#3}}}
\nc{\pdiff}[2]{\frac{\partial #1}{\partial #2}}
\nc{\pdiffn}[3]{\frac{\partial^{#3} #1}{\partial{#2}^{#3}}}
\nc{\abs}[1] {\lvert #1 \rvert}
\nc{\cAc}{{\cal A}_c}
\nc{\cE}{{\cal E}}
\nc{\cF}{{\mathcal F}}
\nc{\cP}{{\cal P}}
\nc{\cV}{{\cal V}}
\nc{\cQ}{{\cal Q}}
\nc{\cGin}{{\cal G}_{\rm in}}
\nc{\cGout}{{\cal G}_{\rm out}}
\nc{\cO}{{\cal O}}
\nc{\Lav}{{\cal L}_{\rm av}}
\nc{\cL}{{\cal L}}
\nc{\cB}{{\cal B}}
\nc{\cZ}{{\cal Z}}
\nc{\cR}{{\cal R}}
\nc{\cT}{{\cal T}}
\nc{\cY}{{\cal Y}}
\nc{\cX}{{\cal X}}
\nc{\cXT}{{{\cal X}(T)}}
\nc{\cBT}{{{\cal B}(T)}}
\nc{\vD}{{\vec \mathcal{D}}}
\nc{\efield}{\mathcal{E}}
\nc{\vE}{{\vec \efield}}
\nc{\vB}{{\vec \mathcal{B}}}
\nc{\vH}{{\vec \mathcal{H}}}
\nc{\ty}{{\tilde y}}
\nc{\tu}{{\tilde u}}
\nc{\tV}{{\tilde V}}
\nc{\Pc}{{\bf P_c}}
\nc{\bx}{{\bf x}}
\nc{\bX}{{\bf X}}
\nc{\bXYZ}{{\bf XYZ}}
\nc{\bY}{{\bf Y}}
\nc{\bF}{{\bf F}}
\nc{\bS}{{\bf S}}
\nc{\dV}{{\delta V}}
\nc{\dE}{{\delta E}}
\nc{\TT}{{\Theta}}
\nc{\dPsi}{{\delta\Psi}}
\nc{\order}{{\cal O}}
\nc{\Rout}{R_{\rm out}}
\nc{\eplus}{e_+}
\nc{\eminus}{e_-}
\nc{\epm}{e_\pm}
\nc{\eps}{\varepsilon}
\nc{\vnabla}{{\vec\nabla}}
\nc{\G}{\Gamma}
\nc{\w}{\omega}
\nc{\mh}{h}
\nc{\mg}{g}
\nc{\vphi}{\varphi}
\nc{\tlambda}{\tilde\lambda}
\nc{\be}{\begin{equation}}
\nc{\ee}{\end{equation}}
\nc{\ba}{\begin{eqnarray}}
\nc{\ea}{\end{eqnarray}}
\nc{\g}{\gamma}
\nc{\ol}{\overline}
\newcommand{\n}{\nu}
\newtheorem{theorem}{Theorem}[section]
\newtheorem{lemma}[theorem]{Lemma}
\newtheorem{prop}[theorem]{Proposition}
\newtheorem{corollary}[theorem]{Corollary}
\newtheorem{defin}[theorem]{Definition}
\newtheorem{rmk}[theorem]{Remark}
\def\R{\mathbb R}
\nc{\T}{\mathbb T}
\nc{\Z}{\mathbb Z}
\nc{\N}{\mathbb N}
\nc{\pt}{\partial_t}
\nc{\la}{\langle}
\nc{\ra}{\rangle}
\nc{\infint}{\int_{-\infty}^{\infty}}
\nc{\halfwidth}{6.5cm}
\nc{\figwidth}{10cm}
\nc{\nlayers}{L} \nc{\nsectors}{M}
\nc{\indicator}{\mathbf{1}}
\nc{\Rhole}{R_{\rm hole}}
\nc{\Rring}{R_{\rm ring}}
\nc{\neff}{n_{\rm eff}}
\nc{\Frem}{F_{\rm rem}}
\nc{\DD}{\Delta}
\nc{\cD}{\mathcal D}
\nc{\lnorm}{\left\|}
\nc{\rnorm}{\right\|}
\nc{\rnormp}{\right\|_{\ell^{p,\eps}}}
\nc{\rar}{\rightarrow}
\nc{\sgn}{{\rm sign}}
\date{\today}
\begin{document}

\title[Dynamics of the Zakharov System]{Smoothing and Global Attractors for the Zakharov System on the Torus}

\author{M.~B.~Erdo\smash{\u{g}}an and  N.~Tzirakis}
\thanks{The authors were partially supported by NSF grants DMS-0900865 (B.~E.), and DMS-0901222 (N.~T.) }

\address{Department of Mathematics \\
University of Illinois \\
Urbana, IL 61801, U.S.A.}

\email{berdogan@math.uiuc.edu \\ tzirakis@math.uiuc.edu }

\begin{abstract}
In this paper we consider the Zakharov system with periodic boundary conditions in dimension one. In the first part of the paper, it is shown that for fixed initial data in a Sobolev space, the difference of the nonlinear and the linear evolution is in a smoother space for all times   the solution exists. The smoothing index depends on a parameter distinguishing the resonant and nonresonant cases.  As a corollary, we obtain polynomial-in-time bounds for the   Sobolev norms with regularity above the energy level. In the second part of the paper, we consider the forced and damped Zakharov system and obtain analogous smoothing estimates. As a corollary we prove the existence and smoothness of global attractors in the energy space.
\end{abstract}

\maketitle

\section{Introduction}
The Zakharov system is a system of non-linear partial differential equations, introduced by Zakharov in 1972, \cite{vz}.  It describes the propagation of Langmuir waves in an ionized plasma. The system with periodic boundary conditions consists of a complex field $u$ (Schr\"odinger part) and a real field $n$ (wave part) satisfying the equation:
\begin{equation}\label{eq:zakharov}
\left\{
\begin{array}{l}
iu_{t}+\alpha u_{xx} =nu, \,\,\,\,  x \in {\mathbb T}, \,\,\,\,  t\in [-T,T],\\
n_{tt}-n_{xx}=(|u|^2)_{xx}, \\
u(x,0)=u_0(x)\in H^{s_0}(\mathbb T), \\
n(x,0)=n_0(x)\in H^{s_1}(\mathbb T), \,\,\,\,n_t(x,0)=n_1(x)\in H^{s_1-1}(\mathbb T),
\end{array}
\right.
\end{equation}
where $\alpha>0$  and $T$ is the time of existence of the solutions. The function $u(x,t)$ denotes the slowly varying envelope of the electric field with a prescribed frequency and the function $n(x,t)$ denotes the deviation of the ion density from the equilibrium. Here $\alpha$ is the dispersion coefficient. In the literature (see, e.g., \cite{ht}) it is standard to include the speed of an ion acoustic wave in a plasma as a coefficient $\beta^{-2}$ in front of $n_{tt}$ where $\beta>0$. One can scale away this parameter using time and amplitude coefficients of the form $t\rightarrow \beta t$, $u \rightarrow \sqrt{\beta}u$, and $n \rightarrow \beta n$ and reduce the system to \eqref{eq:zakharov}. Smooth solutions of the Zakharov system obey the following conservation laws:
$$\|u(t)\|_{L^2(\T)}=\|u_{0}\|_{L^2(\T)}$$
and
$$E(u,n,\nu)(t)=\alpha\int_{\T}|\partial_{x}u|^2dx+\frac{1}{2}\int_{\T} n^2dx+\frac{1}{2}\int_{\T}\n^2dx+\int_{\T}n|u|^2dx=E(u_{0},n_{0},n_{1})$$
where $\nu$ is such that $n_t=\nu_{x}$ and $\nu_t=(n+|u|^2)_x$. These conservation laws identify $H^1 \times L^2 \times H^{-1}$ as the energy space for the system.

For $\alpha=1$, Bourgain, in \cite{jbz}, proved that the problem is locally well-posed in the energy space using the restricted norm method (see, e.g., \cite{Bou2}). The solutions are well-posed in the sense of the following definition
\begin{defin} Let $X, Y, Z$ be Banach spaces.   We say that the system of equations \eqref{eq:zakharov}  is locally well-posed in $H^{s_{0}}(\T)\times H^{s_{1}}(\T)\times H^{s_{1}-1}(\T)$, if for a given initial data $(u_{0}, n_{0}, n_{1}) \in H^{s_{0}}(\T)\times H^{s_{1}}(\T)\times H^{s_{1}-1}(\T)$, there exists $T=T(\|u_0\|_{H^{s_{0}}},\|n_0\|_{H^{s_{1}}},\|n_1\|_{H^{s_{1}-1}})>0$ and a unique solution $$(u,n,n_t) \in \left(X \cap C_{t}^{0}H_{x}^{s_0}([-T,T]\times \mathbb T),\, Y \cap C_{t}^{0}H_{x}^{s_1}([-T,T]\times \mathbb T),\, Z \cap C_{t}^{0}H_{x}^{s_1-1}([-T,T]\times \mathbb T)\right) .$$ We also demand that there is continuity with respect to the initial data in the appropriate topology. If $T$ can be taken to be arbitrarily large then we say that the problem is globally well-posed.
\end{defin}
Thus, the energy solutions exist for all times due to the a priori bounds on the local theory norms. We should note that although the quantity $\int_{\T}n|u|^2dx$  has no definite sign it can be controlled using Sobolev inequalities by the $H^1$ norm of $u$ and the $L^2$ norm of $n$, \cite{pesher}. In \cite{ht}, Takaoka extended the local-in-time theory of Bourgain and proved that when $\frac{1}{\alpha} \in \N$ we have local well-posedness in $H^{s_{0}}\times H^{s_{1}}\times H^{s_{1}-1}$ for  $s_1\geq 0$ and $\max(s_1, \frac{s_1}2+\frac12)\leq s_0\leq s_1+1 $. In the case that $\frac{1}{\alpha} \not\in \N$ one has local well-posedness for $s_1\geq -\frac12$, $\max(s_1, \frac{s_1}2+\frac14)\leq s_0\leq s_1+1 $. A recent result, \cite{kishi}, establishes well-posedness in the case of the higher dimensional torus.

The corresponding Cauchy problem on $\R^d$ has a long history. In this case it is somehow easier to establish the well-posedness of the system  due to the dispersive effects of the solution waves.  We cite the following papers \cite{aa1, aa2, bhht, bc, cht, GTV, kpv, ss} as a historical summary of the results. It is expected that (see, e.g., \cite{kishi}) the optimal regularity range for local well-posedness is on the line $s_1=s_0-\frac{1}{2}$ because the two equations in the Zakharov system equally share the loss of derivative. The Zakharov system is not scale invariant but it can be reduced to a simplified system like in \cite{GTV}, and one can then define a critical regularity. This is given by the pair $(s_{0},s_{1})=(\frac{d-3}{2},\frac{d-4}{2})$, which is also on the line. In dimensions $1$ and $2$, the lowest regularity   for the system to have local solutions has been found to be $(s_{0},s_{1})=(0,-\frac{1}{2})$, \cite{GTV}. It is harder to establish the global solu!
 tions at  this level since there is no conservation law controling the wave part. This has been done only in $1$d, \cite{cht}.

In the first part of this paper we study the dynamics of the solutions in more detail. We prove that the difference between the nonlinear and the linear evolution for both the Schr\"odinder and the wave part is in a smoother space than the corresponding initial data, see Theorem~\ref{thm:main1} and Theorem~\ref{thm:main2} below. This smoothing property is not apparent if one views the nonlinear evolution as a perturbation of the linear flow and apply standard Picard iteration techniques to absorb the nonlinear terms. The result will follow from a combination of the method of normal forms (through differentiation by parts) inspired by the result in \cite{bit}, and the restricted norm method of Bourgain, \cite{Bou2}. Here the method is applied to a dispersive system of equations where the resonances are harder to control and the coupling nonlinear terms introduce additional difficulties in estimating the first order corrections. As a corollary, in the case $\alpha>0$, we obtai!
 n   polynomial-in-time bounds for Sobolev norms above the energy level $(s_{0}, s_{1})=(1,0)$ by a bootstrapping argument utilizing the a priori bounds and the smoothing estimates, see Corollary~\ref{cor:growth} below.
We have recently applied this method to obtain similar results for the periodic KdV with a smooth space-time potential, \cite{et}.  For the details of the method the reader can consult \cite{et}.

In the second part we study the existence of a global attractor (see the next section for a definition of global attractors and the statement of our result) for the dissipative Zakharov system in the energy space. Our motivation comes from the smoothing estimates that we obtained in the first part of the paper and our work in \cite{et1}. More precisely we consider
\begin{equation}\label{eq:fdzakharov}
\left\{
\begin{array}{l}
iu_{t}+\alpha u_{xx}+i\gamma u=nu+f, \,\,\,\,  x \in {\mathbb T}, \,\,\,\,  t\in [-T,T],\\
n_{tt}-n_{xx}+\delta n_t =(|u|^2)_{xx}+g, \\
u(x,0)=u_0(x)\in H^{1}(\mathbb T), \\
n(x,0)=n_0(x)\in L^{2}(\mathbb T), \,\,\,\,n_t(x,0)=n_1(x)\in H^{-1}(\mathbb T), \,\,\,\, f \in H^{1}(\T), \,\,\,\, g \in L^{2}(\T)
\end{array}
\right.
\end{equation}
where $f,\ g$ are  time-independent, $g$ is mean-zero,  $\int_{\T}g(x)dx=0$, and the damping coefficients $\delta,\ \gamma >0$.   For simplicity we set $\gamma = \delta$,   and $g=0$. Our calculations apply equally well to the full system and all proofs go through with minor modifications (in particular, one does not need any other a priori estimates).

The problem with Dirichlet boundary conditions has been considered in \cite{fla} and \cite{gm} in more regular spaces than the energy space. The regularity of the attractor in Gevrey spaces with peridic boundary problem was considered in \cite{sch}. 

\subsection{Notation}
To avoid the use of multiple constants, we  write $A \lesssim B$ to denote that there is an absolute  constant $C$ such that $A\leq CB$.  
We also write $A\sim B$ to denote both $A\lesssim B$ and $B \lesssim A$. 
We also define $\langle \cdot\rangle =1+|\cdot|$.  

We define the Fourier sequence of a $2\pi$-periodic $L^2$ function $u$ as
$$u_k=\frac1{2\pi}\int_0^{2\pi} u(x) e^{-ikx} dx, \,\,\,k\in \mathbb Z.$$
With this normalization we have
$$u(x)=\sum_ke^{ikx}u_k,\,\,\text{ and } (uv)_k=u_k*v_k=\sum_{m+n=k} u_nv_m.$$

As usual, for $s<0$, $H^{s}$ is the completion of $L^2$ under the norm
$$\|u\|_{H^{s}}=\|\widehat u(k) \langle k\rangle^{s}\|_{\ell^2}.$$
Note that for a mean-zero $L^2$ function $u$, $\|u\|_{H^{s}}\sim \|\widehat u(k) |k|^{s}\|_{\ell^2}$.
For a sequence $u_k$, with $u_0=0$, we will use $\|u\|_{H^{s}}$ notation to denote $\|u_k |k|^s\|_{\ell^2}$.
We also define $\dot H^s=\{u\in H^s: u \text{ is mean-zero}\}$.

The following function will appear many times in the proofs below.
\be\nn
\phi_\beta(k):=\sum_{|n|\leq |k|}\frac1{|n|^\beta}\sim \left\{\begin{array}{ll}
1, & \beta>1,\\
\log(1+\la k\ra), &\beta=1,\\
\la k \ra^{1-\beta}, & \beta<1.
 \end{array}\right.
\ee

\section{Statement of Results}
\subsection{Smoothing Estimates for the Zakharov System}
First note that if $n_0$ and $n_1$ are mean-zero then
$n$, $n_t$ remain mean-zero during the evolution since by integrating the wave part of the system we obtain $\partial_{t}^{2}\int_{\T}n(x,t)dx=0$. We will work with this mean-zero assumption in this paper. This is no loss of generality since if $\int_{T}n_{0}(x)dx=A$ and $\int_{T}n_{1}(x)dx=B$, then one can consider the new variables $n \rightarrow n-A-Bt$ and $u \rightarrow e^{i(B\frac{t^2}{2}+At)}u$, and obtain the same system with  mean-zero data.

By considering the operator $d=(-\partial_{xx})^{1/2}$, and writing $n_\pm=n\pm i d^{-1}n_t$, the system can be rewritten as
\begin{equation}\label{eq:zakharov1}
\left\{
\begin{array}{l}
 iu_{t}+\alpha u_{xx} =\frac12(n_++n_-)u, \,\,\,\,  x \in {\mathbb T}, \,\,\,\,  t\in [-T,T],\\
 (i\partial_t\mp d)n_\pm=\pm d (|u|^2), \\
 u(x,0)=u_0(x)\in H^{s_0}(\mathbb T), \,\,\,\, 
n_\pm(x,0)=n_0(x)\pm i d^{-1} n_1(x) \in H^{s_1}(\mathbb T).
\end{array}
\right.
\end{equation}
Note that $d^{-1} n_1(x)$ is well-defined because of the mean-zero assumption, and that $n_+=\overline{n_-}$.

The local well posedness of the system was established in the framework of $X^{s,b}$ spaces introduced by Bourgain in \cite{Bou2}. Let
$$
\|u\|_{X^{s,b}}=\big\|\la k\ra^s \la \tau-\alpha k^2\ra^b \widehat u(k,\tau)  \big\|_{\ell^2_k L^2_\tau},
$$
$$
\|n\|_{Y_\pm^{s,b}}=\big\|\la k\ra^s \la \tau\mp |k|\ra^b \widehat n(k,\tau)  \big\|_{\ell^2_k L^2_\tau}.
$$
Here `$\pm$' corresponds to the norm of $n_\pm$ in the system \eqref{eq:zakharov1}. As usual we also define the restricted norm
$$
\|u\|_{X^{s,b}_T}=\inf_{\widetilde u=u, \,t\in [-T,T]} \|\widetilde u\|_{X^{s,b}}.
$$
The norms $Y_{\pm,T}^{s,b}$ are defined accordingly. We also abbreviate $n_{\pm}(x,0)=n_{\pm,0}$.

\begin{defin}
We say $(s_0,s_1)$ is $\alpha$-admissable if $s_1\geq -\frac12$ and $\max(s_1, \frac{s_1}2+\frac14)\leq s_0\leq s_1+1 $ for $\frac1\alpha\not\in \N$, or if $s_1\geq 0$ and $\max(s_1, \frac{s_1}2+\frac12)\leq s_0\leq s_1+1 $ for $\frac1\alpha\in \N$.
\end{defin}

Takoaka's theorem on local well-posedness can be stated as
\begin{theorem}\cite{ht}\label{thm:takaoka} Suppose $\alpha\neq 0$ and  $(s_0,s_1)$ is $\alpha$-admissable.
 Then given initial data $(u_0,n_{+,0},n_{-,0})\in H^{s_0}\times H^{s_1} \times H^{s_1}$ there exists
$$T\gtrsim \big(\| u_0\|_{H^{s_0}}+\|n_{+,0}\|_{ H^{s_1}}+ \|n_{-,0}\|_{ H^{s_1}} \big)^{-\frac1{12}+},$$
and a unique solution $(u,n_+,n_-)\in C\big([-T,T]:H^{s_0}\times H^{s_1} \times H^{s_1}\big)$. Moreover, we have
$$
\|u\|_{X^{s,\frac12}_T}+\|n_{+,0}\|_{ Y_{+,T}^{s_1,\frac12}}+ \|n_{-,0}\|_{Y_{-,T}^{s_1,\frac12}}
\leq 2 \big(\| u_0\|_{H^{s_0}}+\|n_{+,0}\|_{ H^{s_1}}+ \|n_{-,0}\|_{ H^{s_1}} \big).
$$
\end{theorem}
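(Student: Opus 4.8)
To prove Theorem~\ref{thm:takaoka}, the plan is to run a contraction mapping argument for the Duhamel formulation of \eqref{eq:zakharov1} in a ball of the complete metric space $X^{s_0,\frac12}_T\times Y^{s_1,\frac12}_{+,T}\times Y^{s_1,\frac12}_{-,T}$. Writing $S_\alpha(t)=e^{it\alpha\partial_{xx}}$ and $W_\pm(t)=e^{\pm itd}$ for the two linear propagators, and inserting a smooth time cutoff $\psi(t/T)$, the integral equations read
\be\nn
u=\psi(t/T)S_\alpha(t)u_0-\tfrac{i}{2}\,\psi(t/T)\int_0^t S_\alpha(t-t')\big[(n_++n_-)u\big](t')\,dt',
\ee
\be\nn
n_\pm=\psi(t/T)W_\pm(t)n_{\pm,0}\mp i\,\psi(t/T)\int_0^t W_\pm(t-t')\,d\big(|u|^2\big)(t')\,dt'.
\ee
The standard linear and Duhamel estimates in the restricted Bourgain spaces bound the homogeneous terms by the data norms and give the Duhamel terms a positive power $T^\theta$ at the cost of passing from the $b=\frac12$ norm of the output to a $b=-b'$ norm (with $b'<\frac12$) of the nonlinearity. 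Since $n_+=\overline{n_-}$, only one of the two wave equations is independent, so the whole scheme reduces to two multilinear estimates.

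The two estimates I would need are, for a suitable $b'<\frac12$,
\be\nn
\big\|(n_++n_-)u\big\|_{X^{s_0,-b'}}\les \big(\|n_+\|_{Y^{s_1,\frac12}_+}+\|n_-\|_{Y^{s_1,\frac12}_-}\big)\,\|u\|_{X^{s_0,\frac12}},
\ee
\be\nn
\big\|d(|u|^2)\big\|_{Y^{s_1,-b'}_\pm}\les \|u\|_{X^{s_0,\frac12}}^2.
\ee
Granting these, the map defined by the right-hand sides of the Duhamel equations sends a ball of radius comparable to the data norm into itself and is a contraction for $T$ small, because the nonlinear contributions carry the factor $T^\theta$; uniqueness and continuous dependence follow from the same Lipschitz bound. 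The persistence of the solution in $C([-T,T];H^{s_0}\times H^{s_1}\times H^{s_1})$ comes from the embedding $X^{s,\frac12+}\hookrightarrow C_tH^s_x$, which at the stated endpoint $b=\frac12$ must be recovered by the usual slightly-above-$\frac12$ bookkeeping. The quantitative lower bound on $T$ is read off from the powers of $T$ produced in the fixed-point inequality; the precise exponent $-\frac1{12}+$ reflects the subcritical position of the $\alpha$-admissible range relative to the scaling-critical regularity of the reduced system.

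The heart of the matter, and the step I expect to be hardest, is the pair of bilinear estimates, which on $\T$ cannot rely on the transversality available on $\R$ and instead hinge on the resonance function. For the Schr\"odinger interaction with output frequency $k=k_1+k_2$ (here $k_1$ the wave frequency and $k_2$ the Schr\"odinger frequency) the modulation identity is
\be\nn
(\tau-\alpha k^2)-(\tau_1\mp|k_1|)-(\tau_2-\alpha k_2^2)=-\alpha k_1(k_1+2k_2)\pm|k_1|,
\ee
while for the wave interaction $d(u\overline u)$ one finds a resonance of the form $\mp|k|+\alpha(k_1^2-k_2^2)$. One must show that at least one of the three modulation weights $\la\tau-\alpha k^2\ra$, $\la\tau_1\mp|k_1|\ra$, $\la\tau_2-\alpha k_2^2\ra$ dominates the resonance function and then distribute the derivatives accordingly, using $\ell^2$ Cauchy--Schwarz in the frequency sums. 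The genuinely arithmetic difficulty is that the mixed term $\pm|k_1|$ competes with $\alpha k_1(k_1+2k_2)$: when $\frac1\alpha\in\N$ these pieces can cancel on a full line of integer frequencies ($k_1+2k_2=\pm\frac1\alpha$), which forces the more restrictive admissibility range ($s_1\ge0$, $s_0\ge\frac{s_1}2+\frac12$), whereas for $\frac1\alpha\notin\N$ a Diophantine-type lower bound keeps the resonant set sparse and permits the wider range ($s_1\ge-\frac12$, $s_0\ge\frac{s_1}2+\frac14$). Carrying out these case distinctions, together with absorbing the logarithmic loss inherent to the endpoint $b=\frac12$, is the main technical labor.
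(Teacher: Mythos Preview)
The paper does not prove Theorem~\ref{thm:takaoka} at all: it is stated with the citation \cite{ht} and used as a black box, so there is no ``paper's own proof'' to compare against. Your outline is a faithful high-level sketch of the standard contraction-mapping argument in the Bourgain spaces $X^{s_0,\frac12}\times Y^{s_1,\frac12}_\pm$, and the bilinear estimates you single out, together with the resonance analysis distinguishing $\frac1\alpha\in\N$ from $\frac1\alpha\notin\N$, are exactly the content of Takaoka's paper \cite{ht}. The one point worth flagging is the endpoint $b=\tfrac12$: the embedding $X^{s,\frac12}\hookrightarrow C_tH^s_x$ fails, so either you work at $b=\tfrac12+$ and recover the factor $T^\theta$ via an intermediate $b'<\tfrac12$ on the nonlinearity, or you use the variant spaces $X^{s,\frac12}\cap \ell^2_k L^1_\tau$ that Bourgain and Takaoka employ; you gesture at this (``the usual slightly-above-$\tfrac12$ bookkeeping'') but in an actual proof this must be made explicit. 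Also, the specific exponent $-\tfrac{1}{12}+$ for the local existence time does not drop out of scaling heuristics alone; it comes from the particular powers of $T$ one extracts in the multilinear estimates on $\T$, and would need to be traced through rather than asserted.
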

Now, we can state our results on the smoothing estimates:
\begin{theorem}  \label{thm:main1}
Suppose $\frac1\alpha\not\in \N$, and $(s_0,s_1)$ is $\alpha$-admissable.
Consider the solution of \eqref{eq:zakharov1} with initial data $(u_0,n_{+,0},n_{-,0})\in H^{s_0}\times H^{s_1} \times H^{s_1}$.
Assume that we have a growth bound
$$\|u(t)\|_{H^{s_0}}+\|n_{+}(t)\|_{ H^{s_1}}+ \|n_{-}(t)\|_{ H^{s_1}}\leq C\big(\| u_0\|_{H^{s_0}}+\|n_{+,0}\|_{ H^{s_1}}+ \|n_{-,0}\|_{ H^{s_1}} \big) (1+|t|)^{\gamma(s_0,s_1)}.$$
Then, for any $a_0\leq \min(1,2s_0,1+2s_1)$ (the inequality has to be strict if $s_0-s_1=1$) and for any $a_1\leq \min(1,2s_0,2s_0-s_1)$, we have
\begin{align}\label{usmooth}
u(t)-e^{i\alpha t\partial_x^2}u_0 &\in C^0_tH_{x}^{s_0+a_0}(\R\times \T  ),\\ n_\pm(t)-e^{\mp itd}n_{\pm,0}  &\in C^0_tH_{x}^{s_1+a_1}(\R\times \T  ).\label{nsmooth}
\end{align}
Moreover, for $\beta>1+15\gamma(s_0,s_1)$, we have
\begin{align}\label{growth}
\|u(t)-e^{i\alpha t\partial_x^2}u_0\|_{ H^{s_0+a_0}} +\| n_\pm(t)-e^{\mp itd}n_{\pm,0}\|_{H^{s_1+a_1}} \leq C   (1+|t|)^{\beta},
\end{align}
where $C=C\big(s_0,s_1,a_0,a_1,\| u_0\|_{H^{s_0}},\|n_{+,0}\|_{ H^{s_1}}, \|n_{-,0}\|_{ H^{s_1}} \big).$
\end{theorem}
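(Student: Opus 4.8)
The plan is to pass to the interaction representation and run the differentiation-by-parts (normal form) scheme, then close the resulting multilinear estimates in Bourgain spaces. Writing $u(x,t)=\sum_k u_k(t)e^{ikx}$ and $n_\pm(x,t)=\sum_k (n_\pm)_k(t)e^{ikx}$, I introduce the linear-flow-adjusted coefficients $v_k(t)=e^{i\alpha tk^2}u_k(t)$ and $w^\pm_k(t)=e^{\pm it|k|}(n_\pm)_k(t)$. Since $u_k(t)-(e^{i\alpha t\partial_x^2}u_0)_k=e^{-i\alpha tk^2}(v_k(t)-v_k(0))$ and similarly for $n_\pm$, the quantities whose regularity is asserted in \eqref{usmooth}--\eqref{nsmooth} have the same $H^{s_0+a_0}$, resp. $H^{s_1+a_1}$, norms as $v(t)-v(0)$, resp. $w^\pm(t)-w^\pm(0)$. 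Substituting the interaction variables into \eqref{eq:zakharov1} gives
\begin{align*}
\dot v_k &= -\tfrac{i}{2}\sum_{m}\sum_{\pm} e^{it(2\alpha km-\alpha m^2\mp|m|)}\, w^\pm_m\, v_{k-m},\\
\dot w^\pm_k &= \mp i|k|\sum_{m} e^{it(\alpha k^2-2\alpha mk\pm|k|)}\, v_m\,\overline{v_{m-k}},
\end{align*}
where the sum in the first line omits $m=0$ because $n_\pm$ is mean-zero. The corrections are then the Duhamel integrals $v_k(t)-v_k(0)=\int_0^t \dot v_k\,dt'$ and likewise for $w^\pm$.

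The core step is differentiation by parts: in each oscillatory integral $\int_0^t e^{it'\Phi}F\,dt'$ I integrate by parts in $t'$, producing a boundary term carrying the factor $\Phi^{-1}$ together with a new integral containing $\dot F$. The phases factor as $\Phi=\alpha k^2-2\alpha mk\pm|k|=|k|\big(\alpha\,\mathrm{sgn}(k)(k-2m)\pm 1\big)$ for the wave part and $\Phi=2\alpha km-\alpha m^2\mp|m|=|m|\big(\alpha\,\mathrm{sgn}(m)(2k-m)\mp 1\big)$ for the Schr\"odinger part; writing $q$ for the integer in parentheses, $|\Phi|\sim |k|\,|\alpha q\pm1|$, resp. $|m|\,|\alpha q\mp1|$. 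The hypothesis $\frac1\alpha\notin\N$ is precisely what forbids $|\alpha q|=1$ for any integer $q$, so $\Phi$ never vanishes and, outside a sparse near-resonant set of $m$, one has $|\Phi|\gtrsim \langle k\rangle$, resp. $\langle m\rangle$. For the wave part the prefactor $|k|$ in $\dot w^\pm_k$ is then exactly cancelled by $\Phi^{-1}$, which is the source of one full derivative of smoothing; the admissible ranges $a_0\le\min(1,2s_0,1+2s_1)$ and $a_1\le\min(1,2s_0,2s_0-s_1)$ come from balancing this gain against the regularity available on the two bilinear factors. Summation of the resulting $\sum 1/|\Phi|$-type series in $m$ is controlled by the function $\phi_\beta$ from the introduction, which also accounts for the logarithmic endpoint and the strict inequality required when $s_0-s_1=1$.

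Next I estimate the two types of terms. The boundary terms are pointwise-in-time bilinear expressions in $v$ and $w^\pm$; bounding them in $H^{s_0+a_0}$ and $H^{s_1+a_1}$ by Cauchy--Schwarz in the convolution variable, using only $v(t)\in H^{s_0}$ and $w^\pm(t)\in H^{s_1}$, yields the claimed $C^0_t$ smoothing of \eqref{usmooth}--\eqref{nsmooth}. For the leftover integrals I substitute $\dot v$ and $\dot w^\pm$ from the system, producing trilinear expressions; these I place in the restricted norm spaces $X^{s_0+a_0,\frac12}_T$ and $Y^{s_1+a_1,\frac12}_{\pm,T}$ and control by the coupled bilinear/trilinear estimates underlying Theorem~\ref{thm:takaoka}, reducing everything to $\|u\|_{X^{s_0,1/2}_T}+\|n_\pm\|_{Y^{s_1,1/2}_{\pm,T}}$, which Theorem~\ref{thm:takaoka} bounds by the data. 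A second differentiation by parts on the remaining borderline pieces removes the last obstructions, and assembling the boundary and integral contributions proves \eqref{usmooth}--\eqref{nsmooth} on the local existence interval.

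Finally, for the global-in-time bound \eqref{growth} I iterate the local smoothing estimate over consecutive time steps of length comparable to the local existence time, which shrinks only polynomially by Theorem~\ref{thm:takaoka}, and sum, feeding in the assumed growth bound $\|u(t)\|_{H^{s_0}}+\|n_\pm(t)\|_{H^{s_1}}\lesssim (1+|t|)^{\gamma}$ at each step. Tracking the number of steps against the per-step constants gives the polynomial rate $(1+|t|)^{\beta}$ for any $\beta>1+15\gamma(s_0,s_1)$, the exponent reflecting the $-\frac1{12}+$ local existence time together with the multilinear structure. I expect the main obstacle to be the resonance analysis of $\Phi$: securing the quantitative lower bound $|\Phi|\gtrsim\langle k\rangle$ off the near-resonant set throughout the admissible range, handling the coupling between the Schr\"odinger and wave modulations that makes the bilinear estimates genuinely two-parameter, and recovering the full derivative lost to $d$ in the wave equation without leaving an uncontrolled near-resonant remainder.
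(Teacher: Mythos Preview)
Your overall architecture---interaction variables, one round of differentiation by parts to produce boundary bilinear forms plus trilinear Duhamel remainders, pointwise $H^s$ bounds on the boundary terms, $X^{s,b}$-type bounds on the remainders, then iteration over local intervals---is exactly the scheme the paper uses. The phase factorization and the observation that $\frac1\alpha\notin\N$ kills all resonances are also correct and match the paper's Proposition~\ref{thm:dbp}.

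There is, however, a real gap in the step where you handle the trilinear Duhamel remainders. You write that you ``control [them] by the coupled bilinear/trilinear estimates underlying Theorem~\ref{thm:takaoka}.'' Those estimates do \emph{not} give smoothing: Takaoka's bounds are of the form $\|nu\|_{X^{s_0,b-1}}\lesssim \|n\|_{Y^{s_1,1/2}}\|u\|_{X^{s_0,1/2}}$ and $\|d(|u|^2)\|_{Y^{s_1,b-1}}\lesssim \|u\|_{X^{s_0,1/2}}^2$, i.e.\ output regularity equals input regularity. What is needed here are genuinely new trilinear estimates of the form
\[
\|R_1(u)\|_{X^{s_0+a_0,b-1}}+\|R_2(u,n)\|_{X^{s_0+a_0,b-1}}\lesssim \big(\|u\|_{X^{s_0,1/2}}+\|n\|_{Y^{s_1,1/2}}\big)^3,
\]
and similarly for $R_3,R_4$ in $Y^{s_1+a_1,b-1}$, where the $a_0,a_1$ derivatives of gain come from the explicit $\Phi^{-1}$ multiplier \emph{together with} the modulation weight $\langle\tau-\alpha k^2\rangle^{b-1}$. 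In the paper these are Propositions~\ref{prop:schroxsb} and~\ref{prop:wavexsb}, and their proofs occupy two full sections of case-by-case frequency analysis; the admissible ranges for $a_0,a_1$ and the strict inequality at $s_0-s_1=1$ emerge from that analysis, not from summing $\sum 1/|\Phi|$ alone. This is where the real work lies, and it cannot be delegated to the existing local well-posedness machinery.

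Two smaller points. First, only one differentiation by parts is performed; there is no ``second differentiation by parts on the remaining borderline pieces.'' Second, the Duhamel remainder is estimated in $X^{s_0+a_0,b-1}$ (the inhomogeneous side), not in $X^{s_0+a_0,1/2}$ directly; the passage to $L^\infty_tH^{s_0+a_0}_x$ then goes through the standard embedding after applying the linear propagator, as in \eqref{udelta2}--\eqref{ndelta2}.
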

\begin{theorem}  \label{thm:main2}
Suppose $\frac1\alpha \in \N$, and $(s_0,s_1)$ is $\alpha$-admissable.
Assume that we have a growth bound
$$\|u(t)\|_{H^{s_0}}+\|n_{+}(t)\|_{ H^{s_1}}+ \|n_{-}(t)\|_{ H^{s_1}}\leq C\big(\| u_0\|_{H^{s_0}}+\|n_{+,0}\|_{ H^{s_1}}+ \|n_{-,0}\|_{ H^{s_1}} \big) (1+|t|)^{\alpha(s_0,s_1)}.$$
Then, for any $a_0\leq \min(1, s_1)$ (the inequality has to be strict if $s_0-s_1=1$ and $s_1\geq 1$) and for any $a_1\leq \min(1, 2s_0-s_1-1)$, we have \eqref{usmooth}, \eqref{nsmooth} and \eqref{growth}.
\end{theorem}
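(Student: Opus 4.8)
The plan is to follow the proof of Theorem~\ref{thm:main1} verbatim in structure, changing only the resonance bookkeeping to reflect the arithmetic that $\frac1\alpha\in\N$. Writing \eqref{eq:zakharov1} on the Fourier side and passing to the interaction variables $a_k(t)=e^{i\alpha k^2 t}u_k(t)$ and $b_{\pm,k}(t)=e^{\pm i|k|t}n_{\pm,k}(t)$, the equations become
\begin{align*}
i\partial_t a_k &= \tfrac12\sum_{\pm}\sum_{k_1+k_2=k} e^{i\phi_\pm t}\, b_{\pm,k_1}\, a_{k_2},\\
i\partial_t b_{\pm,k} &= \pm|k|\sum_{k_1-k_2=k} e^{i\psi_\pm t}\, a_{k_1}\,\overline{a_{k_2}},
\end{align*}
with the Schr\"odinger and wave resonance functions
$$\phi_\pm=|k_1|\big[\alpha\,\sgn(k_1)(k_1+2k_2)\mp 1\big],\qquad \psi_\pm=|k|\big[\pm1-\alpha\,\sgn(k)(k_1+k_2)\big].$$
Since $\frac1\alpha=N\in\N$, the bracket in $\phi_\pm$ equals $(m\mp N)/N$ with $m=\sgn(k_1)(k_1+2k_2)\in\Z$, so it vanishes on the nontrivial lattice $k_1+2k_2=\pm\sgn(k_1)/\alpha$; likewise $\psi_\pm$ vanishes on $k_1+k_2=\pm\sgn(k)/\alpha$. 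Off these resonant sets the bracket is $\geq\alpha$ in modulus, so $|\phi_\pm|\gtrsim|k_1|$ and $|\psi_\pm|\gtrsim|k|$ exactly as when $\frac1\alpha\notin\N$; the only novelty relative to Theorem~\ref{thm:main1} is that the resonant sets are now nonempty.

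On the nonresonant region I would apply differentiation by parts in $t$, using $\frac{d}{dt}\frac{e^{i\phi t}}{i\phi}=e^{i\phi t}$. This produces boundary terms carrying a gain of $|\phi_\pm|^{-1}\sim|k_1|^{-1}$ (Schr\"odinger) and $|\psi_\pm|^{-1}\sim|k|^{-1}$ (wave), together with a remainder integral in which $\partial_t$ falls on $b_{\pm,k_1}a_{k_2}$, respectively $a_{k_1}\overline{a_{k_2}}$. Substituting the equations for $\partial_t a,\partial_t b$ turns the remainders into quadrilinear expressions, which I would estimate in $X^{s_0,\frac12}_T$ and $Y_{\pm,T}^{s_1,\frac12}$ by the bilinear $L^4$ and trilinear bounds underlying Theorem~\ref{thm:takaoka}, after a second normal form on their own nonresonant part. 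Summed in $k$, the boundary terms land in $H^{s_0+a_0}$ and $H^{s_1+a_1}$; a single integration by parts gains one derivative, which caps the Schr\"odinger and wave smoothing at $a_0\le1$ and $a_1\le1$, while the borderline high-wave/comparable-Schr\"odinger interaction is a $\phi_\beta$ sum with $\beta=1$, forcing the strict inequality when $s_0-s_1=1$.

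The crux is the resonant contribution, which cannot be smoothed because it does not oscillate. On the Schr\"odinger resonant set the input frequency $k_2$ is pinned to the wave frequency $k_1$, so the resonant term is essentially $\sum_{k_1}b_{\pm,k_1}\,a_{k_2(k_1)}$; its $H^{s_0+a_0}$ norm is controlled by $\|n_\pm\|_{H^{s_1}}\|u\|_{H^{s_0}}$ only when $a_0\le s_1$, which is precisely the restriction $a_0\le\min(1,s_1)$ that is absent in Theorem~\ref{thm:main1}. Similarly, on the wave resonant set both Schr\"odinger frequencies are determined by the output $k$, so the resonant wave term is $\pm|k|\,a_{k_1(k)}\overline{a_{k_2(k)}}$, which lies in $H^{s_1+a_1}$ with norm $\lesssim\|u\|_{H^{s_0}}^2$ exactly when $s_1+a_1+1\le2s_0$, giving $a_1\le\min(1,2s_0-s_1-1)$. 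Thus the degraded indices are a direct footprint of the exact resonances created by $\frac1\alpha\in\N$, and I would check by hand that these pieces already map into the asserted smoother space and are harmless for the difference with the linear flow.

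Finally I would upgrade the fixed-time estimates to \eqref{growth}: establish the smoothing on a unit time interval from Theorem~\ref{thm:takaoka} and the a priori growth hypothesis, then iterate over $\sim|t|$ consecutive intervals, invoking the polynomial growth bound at each application of the multilinear estimates; tracking the number of factors yields the stated power of $(1+|t|)$, and the same estimates applied to differences give the continuity in \eqref{usmooth}--\eqref{nsmooth}. The main obstacle throughout is the resonant analysis: in contrast to the case $\frac1\alpha\notin\N$, the vanishing of $\phi_\pm$ and $\psi_\pm$ on nontrivial lattices requires a separate, non-perturbative treatment, and it is exactly this resonant coupling that limits the smoothing to $a_0\le s_1$ and $a_1\le 2s_0-s_1-1$.
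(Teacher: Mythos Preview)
Your skeleton is the paper's: one differentiation by parts in the interaction picture, separation of resonant and nonresonant contributions, and time-slicing. Your identification of the resonant sets and of how $\rho_1,\rho_2$ force the degraded indices $a_0\le s_1$ and $a_1\le 2s_0-s_1-1$ is exactly right and matches Lemma~\ref{apriori}.

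There is, however, a genuine gap in your treatment of the nonresonant remainder. After one integration by parts and substitution of the equations, the remainders are \emph{trilinear} (the terms $R_1,\dots,R_4$ of Proposition~\ref{thm:dbp}), not quadrilinear. More importantly, the paper does \emph{not} perform a second normal form, nor does it invoke Takaoka's local well-posedness estimates to close. Takaoka's estimates only place the iterates back in $X^{s_0,\frac12}\times Y^{s_1,\frac12}$; they do not by themselves yield the extra $a_0$ or $a_1$ derivatives. Instead, the paper proves new dedicated multilinear bounds (Propositions~\ref{prop:schroxsb} and \ref{prop:wavexsb}) placing $R_1,\dots,R_4$ directly in $X^{s_0+a_0,b-1}$ and $Y^{s_1+a_1,b-1}$, via Cauchy--Schwarz reduction to a multiplier sum and case-by-case summation using Lemma~\ref{lem:sums}. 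For this, the off-resonance lower bound you state, $|\phi_\pm|\gtrsim|k_1|$, is too weak; what is actually used is the factorization $|\phi_\pm|\sim\langle k_1\rangle\langle 2k-k_1\rangle$ (and analogously $|\psi_\pm|\sim\langle j\rangle\langle j-2j_1\rangle$), and the second factor is essential in the multiplier analysis.

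A smaller point: the local existence time in Theorem~\ref{thm:takaoka} is not uniform but $\delta\sim T^{-12\gamma-}$ under the growth hypothesis, so the number of iterated intervals up to time $T$ is $\sim T^{1+12\gamma+}$, not $\sim T$; this is what produces the exponent $\beta>1+15\gamma$ in \eqref{growth}.
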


As an application of these theorems we obtain the following corollary regarding the growth of higher order
Sobolev norms.
\begin{corollary}\label{cor:growth} For any $\alpha>0$, and for any $\alpha$-admissable $(s_0,s_1)$ with $s_0\geq 1$, $s_1\geq 0$, the global solution of \eqref{eq:zakharov1} with
$H^{s_0}\times H^{s_1}\times H^{s_1}$ data satisfies the growth bound
$$
\| u(t)\|_{H^{s_0}}+\|n_{+}(t)\|_{ H^{s_1}}+ \|n_{-}(t)\|_{ H^{s_1}} \leq C_1 (1+|t|)^{C_2},
$$
where $C_1$  depends on  $s_0,s_1$, and $\| u_0\|_{H^{s_0}}+\|n_{+,0}\|_{ H^{s_1}}+ \|n_{-,0}\|_{ H^{s_1}}$, and  $C_2$  depends on $s_0,s_1$.

\end{corollary}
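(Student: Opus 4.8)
The plan is to run a bootstrap in the pair of regularity indices, starting from the energy level $(s_0,s_1)=(1,0)$ — where the conservation laws furnish a time-independent a priori bound — and then repeatedly feeding the smoothing estimates of Theorem~\ref{thm:main1} and Theorem~\ref{thm:main2} back into themselves to climb up to the target pair.

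First I would settle the base case. At $(s_0,s_1)=(1,0)$ the conserved mass $\|u(t)\|_{L^2}=\|u_0\|_{L^2}$ together with the energy $E$ controls $\|u(t)\|_{H^1}$ and $\|n_\pm(t)\|_{L^2}$ uniformly in $t$: the sign-indefinite term $\int_\T n|u|^2$ is absorbed by the $H^1$-norm of $u$ and the $L^2$-norm of $n$ through Gagliardo--Nirenberg, exactly as recalled in the introduction, while $\|n_\pm(t)\|_{L^2}^2=\|n(t)\|_{L^2}^2+\|d^{-1}n_t(t)\|_{L^2}^2$ is precisely the wave part of $E$. Thus the growth hypothesis of the smoothing theorems holds at $(1,0)$ with exponent $p_0=0$, and $(1,0)$ is $\alpha$-admissable in both the resonant and the non-resonant regimes.

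For the inductive step, suppose a bound $\|u(t)\|_{H^{s_0}}+\|n_\pm(t)\|_{H^{s_1}}\lesssim (1+|t|)^{p}$ is known at an $\alpha$-admissable pair $(s_0,s_1)$. Applying the relevant theorem with admissible gains $a_0,a_1$ produces, with $q>1+15p$,
\[
\|u(t)-e^{i\alpha t\partial_x^2}u_0\|_{H^{s_0+a_0}}+\|n_\pm(t)-e^{\mp itd}n_{\pm,0}\|_{H^{s_1+a_1}}\lesssim (1+|t|)^{q}.
\]
Since the Schr\"odinger and wave propagators are isometries on every Sobolev space, the triangle inequality gives
\begin{align*}
\|u(t)\|_{H^{s_0+a_0}} &\leq \|u_0\|_{H^{s_0+a_0}}+C(1+|t|)^{q},\\
\|n_\pm(t)\|_{H^{s_1+a_1}} &\leq \|n_{\pm,0}\|_{H^{s_1+a_1}}+C(1+|t|)^{q}.
\end{align*}
Because the prescribed data lie in the target space $H^{s_0}\times H^{s_1}\times H^{s_1}$, which dominates every intermediate level (low Sobolev norms are controlled by higher ones on $\T$), the right-hand sides are finite, so the growth bound is upgraded to the new pair $(s_0+a_0,s_1+a_1)$ — provided this pair is again $\alpha$-admissable; the same a priori bound also guarantees that the solution persists globally at that regularity.

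Finally I would iterate, choosing the gains at each stage to advance toward the target while staying inside the admissable region. In the non-resonant case one may climb essentially along the top line $s_0=s_1+1$, gaining nearly one derivative in each component per step (with $a_0$ strictly below $1$ there, which still leaves a fixed positive gain). In the resonant case the bound $a_0\leq\min(1,s_1)$ degenerates at the base value $s_1=0$, so one must first raise the wave regularity and then alternate, the pairs zig-zagging through $(1,0)\to(1,1)\to(2,1)\to(2,2)\to\cdots$ with net gain $(1,1)$ every two steps. Either way the regularity advances by a fixed positive amount per step, so after finitely many steps $N=N(s_0,s_1)$ one reaches a pair dominating $(s_0,s_1)$, and monotonicity of the Sobolev norms then yields the claim. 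The recursion $q>1+15p$ compounds to a final exponent $C_2$ that is finite and depends only on $s_0,s_1$ through $N$, while $C_1$ carries the dependence on the data norms. The main obstacle is precisely this bookkeeping: the running pair must remain $\alpha$-admissable \emph{and} the chosen $(a_0,a_1)$ must lie in the allowed ranges at every stage, and the resonant case is delicate because the vanishing $u$-gain at $s_1=0$ forces the zig-zag, so one must verify that no intermediate pair stalls the scheme.
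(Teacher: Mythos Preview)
Your proposal is correct and follows essentially the same approach as the paper: start from the energy-level bound, then inductively upgrade the regularity by applying the smoothing theorem at the current pair and invoking unitarity of the linear propagators. Your treatment of the resonant case is in fact slightly more careful than the paper's---the paper simply notes that at $(1,0)$ one has $a_0=0$ but $a_1\in[0,1]$, so one first obtains the bound at $(1,s_1)$ for $0\le s_1\le 1$ and ``from then on we can take both $a_0>0$ and $a_1>0$,'' whereas your zig-zag $(1,0)\to(1,1)\to(2,1)\to(2,2)\to\cdots$ makes the same point explicitly.
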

\begin{proof}
We drop `$\pm$' signs and work with $u$ and $n$.
First note that because of the energy conservation, $\|u\|_{H^1}$ and $\|n\|_{L^2}$ are bounded for all times. Assume that the claim holds for regularity levels $(s_0,s_1)$. Let $(a_0,a_1)$ be given by Theeorem~\ref{thm:main1} or Theorem~\ref{thm:main2}. Note that for initial data in $H^{s_0+a_0}\times H^{s_1+a_1}$, applying the theorem with $(s_0,s_1)$ and $(a_0,a_1)$, we have
$$
\|u(t)-e^{i\alpha t\partial_x^2}u_0\|_{ H^{s_0+a_0}} +\| n_\pm(t)-e^{\mp itd}n_{\pm,0}\|_{H^{s_1+a_1}} \leq C   (1+|t|)^{\beta}.
$$
Therefore, since the linear groups are unitary, we have
$$
\| u(t)\|_{H^{s_0+a_0}}+\|n(t)\|_{ H^{s_1+a_1}} \leq  C   (1+|t|)^{\beta}+\|u_0\|_{H^{s_0+a_0}}+\|n_0\|_{H^{s_1+a_0}}.
$$
The statement follows by induction on the regularity.

We note that in the case $\frac1\alpha\in\N$, $s_0=1$, $s_1=0$, we have $a_0=0$. However, since
$a_1\in[0,1]$, we obtain the statement for $\alpha$-admissable $(1,s_1)$, $0\leq s_1\leq 1$. From then on
we can take both $a_0>0$ and $a_1>0$.
\end{proof}

\subsection{Existence of a Global Attractor for the Dissipative Zakharov System}

The problem of global attractors for nonlinear PDEs is concerned with the description of the nonlinear dynamics for a given problem as $t \to \infty$. In particular assuming that one has a well-posed problem for all times we can define the semigroup operator $U(t):u_{0}\in H\to u(t)\in H$ where $H$ is the phase space. We want to describe the long time asymptotics of the solution by an invariant set  $X\subset H$ (a global attractor) to which the orbit converges as $t \to \infty$:
$$U(t)X=X, \ \ t\in \R_+,\,\,\,\,\,\,\,\,\,\,d(u(t), X)\to 0.$$
For dissipative systems there are many results (see, e.g., \cite{temam}) establishing the existence of  a  compact   set that satisfies the above properties. Dissipativity is characterized by the existence of a bounded absorbing set into which all solutions enter eventually.  The candidate for the attractor set  is the omega limit set of an absorbing set, $B$, defined by $$\omega(B)=\bigcap_{s \geq 0}\overline{\bigcup_{t\geq s}U(t)B}$$
where the closure is taken on $H$. To state our result we need some definitions from \cite{temam} (also see \cite{et1} for more discussion).
\begin{defin}
We say that a compact subset $\mathcal{A}$ of $ H$ is a  global attractor  for the semigroup $\{U(t)\}_{t \geq 0}$ if $\mathcal{A}$ is invariant under the flow and if  for every $u_{0}\in H$, $d(U(t)u_{0},\mathcal{A})\to 0$ as $t\to \infty$.
\end{defin}
The distance is understood to be the distance of a point to the set $d(x,Y)=\inf_{y\in Y}d(x,y)$.

To state a general theorem for the existence of a global attractor we need one more definition:
\begin{defin}
We say a bounded subset  $\mathcal{B}_0$   of $H$   is  absorbing  if   for any bounded $\mathcal{B} \subset H$ there exists $T=T(\mathcal{B})$ such that for all $t \geq T$, $U(t)\mathcal{B} \subset \mathcal{B}_0$.
\end{defin}
It is not hard to see that the existence of a global attractor $\mathcal{A}$ for a semigroup $U(t)$ implies the existence of an absorbing set. For the converse  we cite the following theorem from \cite{temam} which gives a  general criterion for the existence of a global attractor.

\vspace{5mm}

\noindent
{\bf Theorem A.} {\it
We assume that $H$ is a metric space and that the operator $U(t)$ is a continuous semigroup from $H$ to itself for all $t \geq 0$. We also assume that there exists an  absorbing set $\mathcal{B}_0$. If the semigroup $\{U(t)\}_{t\geq 0}$ is asymptotically compact, i.e. for every bounded sequence $x_k$ in $H$ and every sequence $t_k \to \infty$, $\{U(t_k)x_k\}_{k}$ is relatively compact in $H$, then  $\omega( \mathcal{B}_0)$ is a global attractor.}

Using Theorem A and a smoothing estimate as above, we will prove the following
\begin{theorem}\label{thm:attractor}
Fix $\alpha>0$.
Consider the  dissipative Zakharov system   \eqref{eq:fdzakharov}
on $\mathbb{T}\times [0,\infty)$ with  $u_0\in   H^1$ and with mean-zero $n_0\in L^2$, $n_1\in H^{-1}$. Then the equation possesses a global attractor in $H^1\times \dot L^2\times \dot H^{-1}$. Moreover, for any $a\in (0,1)$,  the global attractor is a compact subset of $H^{1+a}\times H^{a}\times H^{-1+a}$, and it is bounded in $H^{1+a}\times H^{a}\times H^{-1+a}$ by a constant depending  only on $a, \alpha, \gamma$, and $\|f\|_{H^1}$.
\end{theorem}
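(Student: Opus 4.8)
The plan is to verify the three hypotheses of Theorem~A on the phase space $H=H^1\times \dot L^2\times \dot H^{-1}$ and then to read off the smoothness of the attractor from a dissipative analogue of Theorem~\ref{thm:main1}. First I would record that the forced and damped system \eqref{eq:fdzakharov} is globally well-posed in $H$ and generates a continuous semigroup $U(t)$: the local theory of Theorem~\ref{thm:takaoka} is unaffected by the lower-order terms $i\gamma u$ and $\delta n_t$ and by the fixed forcing $f$, and the global bound comes from the a priori estimates below. Continuity with respect to the data in the $H$-topology is inherited from the local well-posedness.

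The second step, and the first genuinely dissipative one, is to produce a bounded absorbing set $\mathcal{B}_0\subset H$. I would first control the mass: pairing the Schr\"odinger equation with $u$ and taking imaginary parts gives $\frac{d}{dt}\|u\|_{L^2}^2=-2\gamma\|u\|_{L^2}^2+2\,\mathrm{Im}\int_\T f\bar u\,dx$, so by Young's inequality $\limsup_{t\to\infty}\|u(t)\|_{L^2}^2\lesssim \gamma^{-2}\|f\|_{L^2}^2$. I would then differentiate the energy $E(u,n,\nu)$ along the flow. The damping produces negative-definite contributions controlling $\int|\partial_x u|^2$ and $\int \nu^2$, but since the dissipation acts on $n_t=\nu_x$ rather than on $n$ there is no direct damping of $\int n^2$. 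To close the estimate I would add a small cross term built from $n_\pm$ (morally $\varepsilon\int \nu\, d^{-1}n\,dx$), producing a Lyapunov functional $\mathcal{L}$ equivalent to the square of the $H$-norm and satisfying $\frac{d}{dt}\mathcal{L}\le -c\,\mathcal{L}+C(\alpha,\gamma,\|f\|_{H^1})$; the indefinite coupling term $\int n|u|^2$ is absorbed using Gagliardo--Nirenberg together with the $L^2$ and energy control of $u$ already in hand. Gronwall's inequality then yields an absorbing ball whose radius depends only on $\alpha,\gamma,\|f\|_{H^1}$.

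The heart of the proof is the third step, asymptotic compactness, which I would obtain from a dissipative smoothing estimate. The argument behind Theorem~\ref{thm:main1} (normal forms via differentiation by parts combined with the restricted-norm method) carries over to \eqref{eq:fdzakharov}, and the damping upgrades the polynomial-in-time bound to a uniform one: for data in $\mathcal{B}_0$ there is $a\in(0,1)$ with
\begin{align*}
\big\|u(t)-e^{-\gamma t}e^{i\alpha t\partial_x^2}u_0\big\|_{H^{1+a}}+\big\|n_\pm(t)-e^{-\gamma t}e^{\mp itd}n_{\pm,0}\big\|_{H^{a}}\le C(\alpha,\gamma,\|f\|_{H^1})
\end{align*}
uniformly in $t\ge 0$. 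Granting this, given a bounded sequence $x_k\in H$ and $t_k\to\infty$, I would fix $\tau>0$ and write $U(t_k)x_k=U(\tau)y_k$ with $y_k:=U(t_k-\tau)x_k\in\mathcal{B}_0$ for $k$ large (absorption). Decomposing $U(\tau)y_k=v_k+w_k$ with $v_k$ the linear damped evolution of $y_k$ and $w_k$ the nonlinear correction, the damping gives $\|v_k\|_H\lesssim e^{-\gamma\tau}R$, while the smoothing estimate bounds $\{w_k\}$ in $H^{1+a}\times H^a\times H^{-1+a}$, which embeds compactly into $H$ by Rellich. Hence $\{U(t_k)x_k\}$ lies within $\varepsilon(\tau)\to 0$ of a precompact set for every $\tau$, so it is totally bounded and therefore relatively compact in $H$. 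Theorem~A then produces the global attractor $\mathcal{A}=\omega(\mathcal{B}_0)$.

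Finally, for the regularity statement I would use invariance: each $x\in\mathcal{A}$ equals $U(t)y$ for some $y\in\mathcal{A}\subset\mathcal{B}_0$ and all $t>0$. Applying the smoothing decomposition and letting $t\to\infty$ along the bounded backward orbit kills the linear damped part and leaves $x$ in $H^{1+a}\times H^a\times H^{-1+a}$ with a bound depending only on $a,\alpha,\gamma,\|f\|_{H^1}$; compactness of $\mathcal{A}$ in this stronger topology follows by running the same estimate with a slightly larger gain $a'>a$ and invoking Rellich. I expect the main obstacle to be the uniform-in-time dissipative smoothing estimate: one must rerun the resonance and normal-form analysis of Theorems~\ref{thm:main1} and~\ref{thm:main2} with the damping present and verify that the factors $e^{-\gamma t}$ from the linear semigroup, together with the boundedness on $\mathcal{B}_0$, genuinely convert the polynomial-in-$t$ growth of the corrections into an $O(1)$ bound, all while controlling the indefinite coupling and the resonant interactions peculiar to the periodic setting. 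The construction of the Lyapunov functional handling the $n_t$-only damping is a secondary but real difficulty.
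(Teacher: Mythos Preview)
Your overall strategy matches the paper's: an absorbing set from energy-type estimates, asymptotic compactness from a dissipative smoothing decomposition $U(t)=(\text{damped linear flow})+(\text{smoother remainder})$, and regularity of the attractor read off from the same decomposition via invariance. The paper simply cites \cite{fla} for the absorbing-set inequality \eqref{aprioribd} rather than building the Lyapunov functional you describe, but your more self-contained sketch is fine.

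There is, however, one genuine gap. Your key estimate
\[
\big\|u(t)-e^{-\gamma t}e^{i\alpha t\partial_x^2}u_0\big\|_{H^{1+a}}+\big\|n_\pm(t)-e^{-\gamma t}e^{\mp itd}n_{\pm,0}\big\|_{H^{a}}\le C
\]
is correct when $\tfrac{1}{\alpha}\notin\N$, but fails as stated for the Schr\"odinger component when $\tfrac{1}{\alpha}\in\N$. In the resonant case the differentiation-by-parts procedure leaves behind the term $\rho_1(k)=n_{2k-\sgn(k)}\,u_{\sgn(k)-k}$ (Proposition~\ref{thm:dbp}), and with $(s_0,s_1)=(1,0)$ Lemma~\ref{apriori} only places $\rho_1$ in $H^{s_0+s_1}=H^1$, not in $H^{1+a}$ for any $a>0$. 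So the ``nonlinear correction'' $w_k$ in your decomposition is \emph{not} bounded in $H^{1+a}$, and the Rellich step does not go through as written.

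The paper repairs this by a short bootstrap exploiting the asymmetry between the two equations: the wave part $n$ \emph{does} satisfy the simple smoothing estimate even in the resonant case (since $\rho_2\in H^{2s_0-1}=H^1\subset H^a$). Writing $n(t')=e^{-it'd-\gamma t'}n_0+N_{t'}(n_0)$ with $N_{t'}(n_0)$ bounded in $H^a$, one splits
\[
\rho_1\big(n(t'),u(t')\big)=\rho_1\big(e^{-it'd-\gamma t'}n_0,\,u(t')\big)+\rho_1\big(N_{t'}(n_0),\,u(t')\big).
\]
The Duhamel integral of the first piece lies only in $H^1$ but comes with a factor $e^{-\gamma t'}$ and is bounded there by $te^{-\gamma t}C$, so it can be lumped with the decaying linear part for the compactness argument. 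The second piece is bounded in $H^{1+a}$ because $\|\rho_1(n,u)\|_{H^{1+a}}\lesssim\|n\|_{H^a}\|u\|_{H^1}$. With this extra splitting inserted, your asymptotic-compactness and regularity arguments then go through unchanged.
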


To prove Theorem~\ref{thm:attractor}  in the case $\frac1\alpha\not \in \N$ we will demonstrate  that  the solution decomposes into two parts; a linear one which decays to zero as time goes to infinity and a nonlinear one which always belongs to a smoother space. As a corollary we prove that all solutions are attracted by a ball in $H^{1+a}\times H^{a}\times H^{-1+a}$, $a\in(0,1)$, whose radius depends only on $a$, the $H^1$ norm of the forcing term and the damping parameter. This implies the existence of a smooth global attractor and provides quantitative information on the size of the attractor set in $H^{1+a}\times H^{a}\times H^{-1+a}$. In addition it implies  that higher order Sobolev norms are bounded for all positive times, see \cite{et1}. In the case $\frac1\alpha\in \N$ the proof is slightly different because of a resonant term.

We close this section with a discussion of the  well-posedness of \eqref{eq:fdzakharov} in $H^1\times L^2\times H^{-1}$. We first rewrite the system (when $\gamma=\delta, g=0$) by passing to $n_\pm$ variables as above:
\begin{equation}\label{eq:fdzakharov1}
\left\{
\begin{array}{l}
 (i\partial_t+\alpha\partial_x^2+i\gamma)u =\frac{n_++n_-}{2}  u+ f , \,\,\,\,  x \in {\mathbb T}, \,\,\,\,  t\in [-T,T],\\
 (i\partial_t\mp  d+i\gamma)n_\pm = \pm d (|u|^2), \\
 u(x,0)=u_0(x) \in H^{1}(\mathbb T), \,\,\,\, 
n_\pm(x,0)=n_{\pm,0}(x)=n_0(x)\pm i d^{-1} n_1(x) \in L^2(\mathbb T).
\end{array}
\right.
\end{equation}

\begin{theorem}\label{thm:fdlocal}
Given initial data $(u_0,n_{+,0},n_{-,0})\in H^{1}\times L^2 \times L^2$ there exists
$$T=T\big(\| u_0\|_{H^{1}},\|n_{+,0}\|_{ L^2}, \|n_{-,0}\|_{ L^2},\|f\|_{H^1},\gamma \big),$$
and a unique solution $(u,n_+,n_-)\in C\big([-T,T]:H^{1}\times L^2 \times L^2\big)$ of \eqref{eq:fdzakharov1}. Moreover, we have
$$
\|u\|_{X^{1,\frac12}_T}+\|n_{+,0}\|_{ Y_{+,T}^{0,\frac12}}+ \|n_{-,0}\|_{Y_{-,T}^{0,\frac12}}
\leq 2 \big(\| u_0\|_{H^{1}}+\|n_{+,0}\|_{ L^2}+ \|n_{-,0}\|_{L^2} \big).
$$
\end{theorem}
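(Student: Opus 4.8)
The plan is to solve \eqref{eq:fdzakharov1} by a standard fixed-point argument in the restricted norm spaces, treating the damping and the forcing as perturbations of the undamped, unforced flow that underlies Theorem~\ref{thm:takaoka} at the admissible regularity $(s_0,s_1)=(1,0)$. Passing to Duhamel form, the mild formulation reads
\begin{align*}
u(t)&=e^{i\alpha t\partial_x^2-\gamma t}u_0-i\int_0^t e^{(i\alpha\partial_x^2-\gamma)(t-t')}\Big[\tfrac{n_++n_-}{2}\,u+f\Big](t')\,dt',\\
n_\pm(t)&=e^{\mp itd-\gamma t}n_{\pm,0}\mp i\int_0^t e^{(\mp id-\gamma)(t-t')}\,d(|u|^2)(t')\,dt'.
\end{align*}
I would localize in time by a smooth cutoff, define the solution map $\Phi=(\Phi_0,\Phi_+,\Phi_-)$ by the right-hand sides, and seek a fixed point in the ball $\mathcal{S}_R$ of triples with $\|u\|_{X^{1,1/2}_T}\le R$ and $\|n_\pm\|_{Y^{0,1/2}_{\pm,T}}\le R$, where $R\sim \|u_0\|_{H^1}+\|n_{+,0}\|_{L^2}+\|n_{-,0}\|_{L^2}+\|f\|_{H^1}$.

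The core nonlinear input is already available, since $(1,0)$ is $\alpha$-admissable: the bilinear estimates $\|\tfrac{n_++n_-}{2}u\|_{X^{1,-1/2+}_T}\lesssim (\|n_+\|_{Y^{0,1/2}_{+,T}}+\|n_-\|_{Y^{0,1/2}_{-,T}})\|u\|_{X^{1,1/2}_T}$ and $\|d(|u|^2)\|_{Y^{0,-1/2+}_{\pm,T}}\lesssim \|u\|_{X^{1,1/2}_T}^2$ are precisely the ones on which Bourgain's and Takaoka's theory rests. Combined with the standard linear and Duhamel estimates in $X^{s,b}$ and $Y^{s,b}_\pm$ — the homogeneous bound $\|\psi(t)e^{i\alpha t\partial_x^2}u_0\|_{X^{s,1/2}}\lesssim\|u_0\|_{H^s}$ and the inhomogeneous bound for $\psi_T(t)\int_0^t e^{i\alpha(t-t')\partial_x^2}F\,dt'$ in $X^{s,1/2}_T$ with a gain of a positive power of $T$ when $F$ lies in $X^{s,-1/2+}_T$ (and the corresponding statements for the $\pm$ wave groups) — these furnish the mapping and difference bounds needed to close the contraction, once the perturbative terms are handled.

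The only genuinely new features are the damping and the forcing, and both are harmless. The damping factor $e^{-\gamma(t-t')}$ splits as $e^{-\gamma t}e^{\gamma t'}$; after localization to $[-T,T]$ each factor is a smooth function of a single time variable, bounded by $e^{\gamma T}$, and multiplication by such a function is bounded on the relevant $X^{s,b}_T$ and $Y^{s,b}_{\pm,T}$ spaces (a standard property of restricted-norm spaces), so the damping only changes the implicit constants into ones depending on $\gamma$ and $T$. For the forcing, since $f$ is time-independent and lies in $H^1$ — exactly the regularity of the Schr\"odinger data — its Duhamel contribution obeys $\|\psi_T(t)\int_0^t e^{i\alpha(t-t')\partial_x^2}f\,dt'\|_{X^{1,1/2}_T}\lesssim T^{1/2}\|f\|_{H^1}$, which is small for small $T$ and feeds only into the radius $R$.

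Assembling these estimates, $\Phi$ maps $\mathcal{S}_R$ into itself and is a contraction provided $T=T(\|u_0\|_{H^1},\|n_{\pm,0}\|_{L^2},\|f\|_{H^1},\gamma)$ is taken small enough; the unique fixed point is the desired solution, membership in $C([-T,T]:H^1\times L^2\times L^2)$ follows from the embedding of the $b=1/2$ spaces into continuous-in-time trajectories, and the claimed bound on $\|u\|_{X^{1,1/2}_T}+\|n_{\pm}\|_{Y^{0,1/2}_{\pm,T}}$ is exactly the radius of the ball. I expect no serious obstacle here: the hard multilinear analysis is inherited from the admissibility of $(1,0)$, and the only point requiring care is that the \emph{real} exponential damping factor not spoil the endpoint $b=1/2$ estimates — which is why I would localize in time and reduce $e^{-\gamma(t-t')}$ to single-variable smooth multipliers before invoking the boundedness of such multipliers on the restricted norm spaces.
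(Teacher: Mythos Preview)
Your proposal is correct and follows essentially the same approach the paper indicates: the authors do not prove Theorem~\ref{thm:fdlocal} in detail but simply remark that it ``follows by using the a priori estimates of Takaoka in \cite{ht}'' with the forced--damped modifications handled as in \cite[Theorem~2.1, Lemma~2.2]{et1}, which is precisely the fixed-point scheme you outline. One minor caution: the embedding $X^{s,1/2}\hookrightarrow C^0_tH^s_x$ fails at the endpoint $b=\tfrac12$, so the continuity in time should be extracted from the iteration structure (or by working at $b=\tfrac12+$ on the linear side) rather than from a direct embedding.
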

This theorem follows by using the a priori estimates of Takaoka in \cite{ht}. In the case of forced and damped KdV, this was done in \cite[Theorem 2.1, Lemma 2.2]{et1}.

  The global well-posedness follows from the following a priori estimate for the system \eqref{eq:fdzakharov1} which was obtained in \cite{fla} (recall that $n_\pm=n\pm id^{-1}n_t$): 
\be\label{aprioribd}
\| u \|_{H^{1}}+\|n_{+ }\|_{ L^2}+ \|n_{- }\|_{L^2} \leq C_1+C_2e^{-C_3t}, \,\,\,\,\,t>0,
\ee
where $C_1=C_1(\alpha,\gamma,\|f\|_{H^{1}})$, $C_2=C_2(\alpha,\gamma,\|f\|_{H^{1}},\| u_0\|_{H^{1}},\|n_{\pm,0}\|_{ L^2} )$, and $C_3=C_3(\alpha,\gamma)$.
In fact this was proved in \cite{fla} for   Dirichlet boundary conditions. In the case of periodic boundary conditions, the proof remains valid. Note that \eqref{aprioribd} also implies the existence of an absorbing set $\mathcal{B}_0$ in $H^{1}\times L^2 \times L^2$ of radius $C_1(\alpha,\gamma,\|f\|_{H^{1}})$.

\section{Proof of Theorem~\ref{thm:main1} and Theorem~\ref{thm:main2}}\label{sec:evol}
In this section we drop the `$\pm$' signs and work with one $n$. We also set $Y=Y_+$.
\begin{equation}\label{eq:zakharov2}
\left\{
\begin{array}{l}
 iu_{t}+\alpha u_{xx} =n u, \,\,\,\,  x \in {\mathbb T}, \,\,\,\,  t\in [-T,T],\\
 (i\partial_t- d)n =  d (|u|^2), \\
 u(x,0)=u_0(x)\in H^{s_0}(\mathbb T), \,\,\,\, 
n(x,0)=n_0(x)+ i d^{-1} n_1(x) \in H^{s_1}(\mathbb T).
\end{array}
\right.
\end{equation}

\begin{rmk}
We note that since $n_+=\overline{n_-}$ all of our claims about \eqref{eq:zakharov2} is also valid for \eqref{eq:zakharov1}. The difference in the proof will arise in the differentiation by parts process and the $X^{s,b}$ estimates. Because of $\eqref{pmissue}$, in the formulas \eqref{veqnmmv}, \eqref{meqnmvv}, \eqref{meqnvmv}, there will additional sums in which every term, in the phase and in the multiplier with an $|\cdot|$ sign, will have a `$\pm$' sign  in front. This change won't alter the proofs for the $X^{s,b}$ estimates, in fact, all the cases we considered will work exactly the same way. Also it won't change the structure of the resonant sets in the case $\frac1\alpha\in\N$.
\end{rmk}

We will prove Theorem~\ref{thm:main2} only for $\alpha=1$. Therefore, below we either have $\frac1\alpha\not \in \N$ or $\alpha=1$. The case $\alpha\neq 1, \frac1\alpha\in \N$ can be handled by only cosmetic changes in the proof. Writing
$$u(x,t)=\sum_{k } u_k(t) e^{ikx},\,\,\,\,\,n(x,t)=\sum_{j\neq 0} n_j(t) e^{ijx},$$
we obtain the following system for the Fourier coefficients:
\begin{equation}\label{eq:zakharov3}
\left\{
\begin{array}{l}
 i\partial_t  u_k - \alpha k^2 u_k =\sum_{k_1+k_2=k,\,k_1\neq 0} n_{k_1} u_{k_2}, \\
 i\partial_t n_j- |j| n_j = |j| \sum_{j_1+j_2=j} u_{j_1} \overline{u_{-j_2}}, \,\,\,\,j\neq 0 \\
 u_k(0)=(u_0)_k, \,\,\,\, 
n_j(0)=(n_0)_j+ i |j|^{-1} (n_1)_j,\,\,\,j\neq 0.
\end{array}
\right.
\end{equation}

We start with the following proposition which follows from differention by parts.

\begin{prop}\label{thm:dbp}
The system \eqref{eq:zakharov3} can be written in the following form:
\begin{align}\label{v_eq_dbp}
i\partial_t\big[e^{it\alpha k^2}u_k+e^{it\alpha k^2}B_1(n,u)_k\big]&= e^{it\alpha k^2}\big[\rho_1(k)+R_1(u)(\widehat k, t)+R_2(u,n)(\widehat k, t)\big],\\
 i\partial_t \big[e^{it|j|}n_j+e^{it|j|} B_2(u)_j\big] &= e^{it|j|}\big[\rho_2(j)+R_3(u,n)(\widehat j, t)+R_4(u,n)(\widehat j, t)\big], \label{m_eq_dbp}
\end{align}
where
\begin{align*}
B_1(n,u)_k=\sum^*_{k_1+k_2=k,k_1\neq 0}\frac{  n_{k_1}u_{k_2}}{\alpha k^2-\alpha k_2^2-|k_1|},\,\,\,\,\,\,\,
B_2(u)_j=|j| \sum^*_{j_1+j_2=j}\frac{   u_{j_1} \overline{u_{-j_2}}
 }{ |j|-\alpha j_1^2+\alpha j_2^2 }.
\end{align*}
$$
R_1(u)(\widehat k, t)=\sum_{k_1,k_2}^* \frac{|k_1+k_2| u_{k_1}\overline{u_{-k_2}}\, u_{k-k_1-k_2}}{\alpha k^2-\alpha (k-k_1-k_2)^2-|k_1+k_2|}.
$$
$$
R_2(u,n)(\widehat k, t)=\sum_{k_1,k_2\neq 0}^* \frac{n_{k_1}n_{k_2} u_{k-k_1-k_2}}{ \alpha k^2-\alpha (k-k_1)^2 -|k_1| }.
$$
$$
R_3(u,n)(\widehat j, t)=|j| \sum_{j_1\neq 0, j_2}^* \frac{   n_{j_1} u_{j_2} \overline{u_{j_1+j_2-j}}
 }{ |j|-\alpha (j_1+j_2)^2+\alpha (j-j_1-j_2)^2 }.
$$
$$
R_4(u,n)(\widehat j, t)=|j| \sum_{j_1\neq 0, j_2}^* \frac{ \overline{n_{-j_1}}  u_{j_2}   \, \overline{u_{j_1+j_2-j}}
 }{ |j|-\alpha j_2^2+\alpha (j-j_2)^2 }.
$$
Here, $\sum^*$ means that the sum is over all nonresonant terms, i.e., over all indices for which the denominator is not zero.
Moreover, the resonant terms $\rho_1$ and $\rho_2$ are zero if $\frac1\alpha \not \in\N$. For $\alpha=1$,
$$
\rho_1(k)=n_{2k-\sgn(k)}u_{\sgn(k)-k},\,\,k\neq 0,\,\,\,\,\,\,\,\,\,\,\,\rho_2(j)=|j|u_{\frac{j+\sgn(j)}{2}}\overline{u_{\frac{j-\sgn(j)}{2}}},\,\,\,j \text{ odd}.
$$
\end{prop}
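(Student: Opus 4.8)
The plan is to implement the method of normal forms as a \emph{differentiation by parts} in time, which is the mechanism implicit in the form of \eqref{v_eq_dbp}--\eqref{m_eq_dbp}. First I would multiply the two equations in \eqref{eq:zakharov3} by the integrating factors $e^{it\alpha k^2}$ and $e^{it|j|}$, turning the left-hand sides into $i\partial_t(e^{it\alpha k^2}u_k)$ and $i\partial_t(e^{it|j|}n_j)$. Extracting the free oscillations $e^{-it\alpha k_2^2}$ and $e^{-it|k_1|}$ carried by the factors $u_{k_2}$ and $n_{k_1}$ on the right (and similarly for the wave equation), every quadratic term acquires an oscillatory weight $e^{it\phi}$ whose phase is exactly the denominator that later appears in $B_1$ and $B_2$: $\phi=\alpha k^2-\alpha k_2^2-|k_1|$ with $k_1+k_2=k$ for the Schr\"odinger equation, and $\theta=|j|-\alpha j_1^2+\alpha j_2^2$ with $j_1+j_2=j$ for the wave equation.

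The key algebraic step is to split each sum according to whether the phase vanishes. On the non-resonant set I would write $e^{it\phi}=\frac{1}{i\phi}\partial_t e^{it\phi}$ and integrate by parts in $t$; the differentiated exponential recombines with the integrating factor to produce the boundary terms $e^{it\alpha k^2}B_1(n,u)_k$ and $e^{it|j|}B_2(u)_j$, which I move to the left-hand side of \eqref{v_eq_dbp}--\eqref{m_eq_dbp}. The leftover term, in which $\partial_t$ falls on the product of Fourier coefficients, is where the equations of motion re-enter: substituting the right-hand sides of \eqref{eq:zakharov3} for $\partial_t n_{k_1}$, $\partial_t u_{k_2}$ and their conjugates converts each quadratic remainder into a cubic one. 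For the Schr\"odinger equation the product rule gives two families: differentiating the $n$-factor through the wave equation yields the three-$u$ term $R_1$ (the factor $|k_1+k_2|$ being inherited from the $d$ in the wave nonlinearity), while differentiating the $u$-factor through the Schr\"odinger equation yields $R_2$, which carries two $n$-factors. For the wave equation the product rule acts on $u_{j_1}\overline{u_{-j_2}}$, and differentiating the unconjugated and the conjugated $u$-factor produces $R_3$ and $R_4$ respectively; the conjugation in the latter case is precisely what accounts for the $\overline{n_{-j_1}}$ appearing in $R_4$. After the substitution I would relabel indices to match the stated denominators (for instance taking the old pair to be $(k_1+k_2,\,k-k_1-k_2)$ in $R_1$ and $(k_1,\,k-k_1)$ in $R_2$), which is routine bookkeeping.

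It remains to evaluate the resonant contributions, namely the terms with $\phi=0$ or $\theta=0$ that the integration by parts leaves untouched. Here I would use the factorizations $\phi=|k_1|\big(\alpha\,\sgn(k_1)(k+k_2)-1\big)$ and $\theta=|j|\big(1-\alpha\,\sgn(j)(j_1-j_2)\big)$: since $k+k_2$ and $j_1-j_2$ are integers and $k_1\neq0$, each phase can vanish only when $1/\alpha\in\Z$, which immediately gives $\rho_1=\rho_2=0$ whenever $\frac1\alpha\notin\N$. For $\alpha=1$ the resonance conditions become linear: $\sgn(k_1)(2k-k_1)=1$ forces $k_1=2k-\sgn(k)$ and hence $\rho_1(k)=n_{2k-\sgn(k)}u_{\sgn(k)-k}$, while $\sgn(j)(2j_1-j)=1$ forces $j$ odd with $j_1=(j+\sgn(j))/2$, yielding the stated $\rho_2(j)$. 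The main obstacle I anticipate is not a single estimate but the careful bookkeeping in this last reduction together with the relabelling of the cubic sums: one has to track the complex conjugations (which reflect indices $m\mapsto-m$ and conjugate the coefficients) and confirm that the resonant index sets are described with no omission or double counting, so that the clean closed forms for $B_1,B_2,R_1,\dots,R_4,\rho_1,\rho_2$ emerge exactly as stated.
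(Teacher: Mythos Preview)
Your proposal is correct and follows essentially the same route as the paper: the paper introduces the integrating-factor variables $v_k=e^{i\alpha k^2 t}u_k$, $m_j=e^{i|j|t}n_j$, writes each right-hand side as an oscillatory sum, separates the resonant indices, applies the identity $e^{it\phi}=\partial_t(e^{it\phi})/(i\phi)$ on the nonresonant set, and then substitutes the equations for $\partial_t v$ and $\partial_t m$ to obtain the cubic remainders $R_1,\dots,R_4$; the resonance analysis via the factorizations of $\phi$ and $\theta$ is exactly as you describe. The only cosmetic difference is that the paper carries out the calculation in the $(v,m)$ variables and transforms back at the end, whereas you phrase it directly in $(u,n)$ with integrating factors, but the computations are identical.
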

\begin{proof}[Proof of Proposition~\ref{thm:dbp}]
Changing the variables $m_j=n_je^{i|j|t}$ and $v_k=u_ke^{i\alpha k^2 t}$ in \eqref{eq:zakharov3}, we obtain
\begin{equation}\label{eq:zakharov4}
\left\{
\begin{array}{l}
 i\partial_t  v_k   =\sum_{k_1+k_2=k,\,k_1\neq 0} e^{it(\alpha k^2-\alpha k_2^2 -|k_1|)} m_{k_1} v_{k_2}, \\
 i\partial_t m_j  = |j| \sum_{j_1+j_2=j} e^{it(|j|-\alpha j_1^2+\alpha j_2^2)} v_{j_1} \overline{v_{-j_2}}, \,\,\,\,j\neq 0 \\
 v_k(0)=(u_0)_k, \,\,\,\, 
m_j(0)=(n_0)_j+ i |j|^{-1} (n_1)_j,\,\,\,j\neq 0.
\end{array}
\right.
\end{equation}
It is easy to check that if we define $m_j^+$ and $m_j^-$ accordingly, then
\be\label{pmissue}
\partial_t m_j^-=\overline{\partial_t m_{-j}^+}.
\ee

Note that the exponents do not vanish if $1/\alpha$ is not an integer.
On the other hand if $\alpha=1$, then the resonant set is:
\begin{align*}
&(k_1,k_2)=\big(2k-\sgn(k),\sgn(k)-k\big),\,\,\,k\neq 0.\\
&(j_1,j_2)=\Big(\frac{j+\sgn(j)}{2},\frac{j-\sgn(j)}{2}\Big),\,\,\,j \text{ odd}.
\end{align*}
The contribution of the corresponding terms give $\rho_1$ and $\rho_2$ in the case $\alpha=1$. Below, we assume that $\frac1\alpha\not\in\N$.

Differentiating by parts in the $v$ equation we obtain
\begin{align*}
 i\partial_t  v_k    =\sum_{k_1+k_2=k,\,k_1\neq 0} e^{it(\alpha k^2-\alpha k_2^2 -|k_1|)} m_{k_1} v_{k_2}
&=\sum_{k_1+k_2=k,\,k_1\neq 0} \frac{\partial_t \big(e^{it(\alpha k^2-\alpha k_2^2 -|k_1|)} m_{k_1} v_{k_2}\big)}{i(\alpha k^2-\alpha k_2^2 -|k_1|)}\\
&+i\sum_{k_1+k_2=k,\,k_1\neq 0} \frac{ e^{it(\alpha k^2-\alpha k_2^2 -|k_1|)} \partial_t\big(m_{k_1} v_{k_2}\big)}{ \alpha k^2-\alpha k_2^2 -|k_1| }.
\end{align*}
The second sum can be rewritten using the equation as follows:
\begin{align}\nn
&\sum_{k_1+k_2+k_3=k,\,k_1+k_2\neq 0} \frac{ e^{it\alpha( k^2-  k_1^2+  k_2^2-  k_3^2  )}    |k_1+k_2| v_{k_1} \overline{v_{-k_2}}  v_{k_3} }{ \alpha k^2-\alpha k_3^2 -|k_1+k_2| }\\
&+\sum_{k_1+k_2+k_3=k,\,k_1,k_2\neq 0} \frac{ e^{it(\alpha k^2-\alpha k_3^2  -|k_1|-|k_2|)}
    m_{k_1}m_{k_2} v_{k_3}
 }{ \alpha k^2-\alpha (k_2+k_3)^2 -|k_1| }.\label{veqnmmv}
\end{align}

Now, we differentiate by parts in the $m$ equation:
 \begin{align*}
 i\partial_t  m_j =|j| \sum_{j_1+j_2=j} e^{it(|j|-\alpha j_1^2+\alpha j_2^2)} v_{j_1} \overline{v_{-j_2}}
&=|j| \sum_{j_1+j_2=j}\frac{\partial_t \big(e^{it(|j|-\alpha j_1^2+\alpha j_2^2)} v_{j_1} \overline{v_{-j_2}}
\big)}{i(|j|-\alpha j_1^2+\alpha j_2^2)}
   \\
& +i|j| \sum_{j_1+j_2=j}\frac{e^{it(|j|-\alpha j_1^2+\alpha j_2^2)} \partial_t \big(v_{j_1} \overline{v_{-j_2}}
\big)}{ |j|-\alpha j_1^2+\alpha j_2^2 }.
\end{align*}
The second sum can be rewritten using the equation as follows:
\begin{align}\label{meqnmvv}
 &|j| \sum_{j_1+j_2+j_3=j,j_1\neq 0}\frac{e^{it(|j| +\alpha j_3^2-\alpha j_2^2 -|j_1|)}   m_{j_1} v_{j_2} \overline{v_{-j_3}}
 }{ |j|-\alpha (j_1+j_2)^2+\alpha j_3^2 }\\
 &+ |j| \sum_{j_1+j_2+j_3=j, j_2\neq 0}\frac{e^{it(|j|-\alpha j_1^2+\alpha j_3^2+|j_2|)} v_{j_1}   \overline{m_{-j_2}} \, \overline{v_{-j_3}}
 }{ |j|-\alpha j_1^2+\alpha (j_2+j_3)^2 }.\label{meqnvmv}
\end{align}
The statement follows by going back to $u$ and $n$ variables.

\end{proof}

Integrating \eqref{v_eq_dbp} and \eqref{m_eq_dbp} from $0$ to $t$, we obtain
\begin{multline}\label{new_u}
u_k(t)-e^{-it\alpha k^2}u_k(0) =e^{-it\alpha k^2} B_1(n,u)_k(0)-B_1(n,u)_k(t)\\-i\int_0^t e^{-i\alpha k^2(t-s)}\big[\rho_1(k)+R_1(u)(\widehat k, s)+R_2(u,n)(\widehat k, s)\big] ds.
\end{multline}
\begin{multline}\label{new_n}
n_j(t)-e^{-it|j|}n_j(0) =e^{-it|j|} B_2(u)_j(0)-B_2(u)_j(t)\\-i\int_0^t e^{-i|j|(t-s)}\big[\rho_2(j)+R_3(u,n)(\widehat j, s)+R_4(u,n)(\widehat j, s)\big] ds.
\end{multline}
Below we obtain a priori estimates for  $\rho_1,\rho_2,B_1$, and $B_2$. Before that we state a technical lemma that will be used many times in the proofs.
\begin{lemma}\label{lem:sums} a) If  $\beta\geq \gamma\geq 0$ and $\beta+\gamma>1$, then
\be\nn
\sum_n\frac{1}{\la n-k_1\ra^\beta \la n-k_2\ra^\gamma}\lesssim \la k_1-k_2\ra^{-\gamma} \phi_\beta(k_1-k_2).
\ee
b) For $\beta\in(0,1]$, we have
$$
 \int_\R \frac{d\tau}{\la \tau+\rho_1 \ra^\beta \la \tau+\rho_2\ra}\lesssim \frac{1}{\la \rho_1-\rho_2 \ra^{\beta-}}.
$$
c) If $\beta>1/2$, then
$$\sum_n\frac{1}{\la n^2+c_1n+c_2\ra^\beta}\lesssim 1,$$
where the implicit constant is independent of $c_1$ and $c_2$.
\end{lemma}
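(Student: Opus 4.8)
The plan is to treat the three parts separately; each is an elementary but quantitatively delicate lattice/integral estimate, and in all three the engine is the same region-splitting idea: decompose the domain of summation (or integration) into the piece where the first factor controls the size, the piece where the second does, and a remaining tail where both factors are comparable, then bound the three contributions. A recurring elementary inequality I will lean on throughout is $\langle m\rangle^{1-\beta}\lesssim\phi_\beta(m)$, which holds in each of the three regimes $\beta>1$, $\beta=1$, $\beta<1$ directly from the definition of $\phi_\beta$ (it is an equality up to constants when $\beta<1$, and the right side is bounded below when $\beta\geq 1$).

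For part (a), I would first translate the index by $n\mapsto n+k_2$ to reduce the sum to $\sum_n\langle n-m\rangle^{-\beta}\langle n\rangle^{-\gamma}$ with $m=k_1-k_2$, so the target is $\langle m\rangle^{-\gamma}\phi_\beta(m)$. Splitting into $\{|n|\leq|m|/2\}$, $\{|n-m|\leq|m|/2\}$, and the complement: on the first region $\langle n-m\rangle\gtrsim\langle m\rangle$, so factoring out $\langle m\rangle^{-\beta}$ and summing $\langle n\rangle^{-\gamma}$ gives $\langle m\rangle^{-\beta}\phi_\gamma(m)$, which one checks is $\lesssim\langle m\rangle^{-\gamma}\phi_\beta(m)$ using $\gamma\leq\beta$ together with the elementary inequality above. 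On the second region $\langle n\rangle\gtrsim\langle m\rangle$, so factoring out $\langle m\rangle^{-\gamma}$ and summing $\langle n-m\rangle^{-\beta}$ yields $\langle m\rangle^{-\gamma}\phi_\beta(m)$ directly. On the tail both factors are $\gtrsim\langle n\rangle$, so the summand is $\lesssim\langle n\rangle^{-\beta-\gamma}$ and, since $\beta+\gamma>1$, the tail sums to $\lesssim\langle m\rangle^{1-\beta-\gamma}$, which is again $\lesssim\langle m\rangle^{-\gamma}\phi_\beta(m)$ by the same inequality.

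Part (b) is the continuous analogue, and I would run the identical three-region split after the shift $\tau\mapsto\tau-\rho_2$, reducing to $\int_\R\langle\tau+\mu\rangle^{-\beta}\langle\tau\rangle^{-1}\,d\tau$ with $\mu=\rho_1-\rho_2$. Near $\tau=0$ and near $\tau=-\mu$ one factor is bounded below by $\langle\mu\rangle$, and integrating the other over a window of length $\sim|\mu|$ produces at worst $\langle\mu\rangle^{-\beta}\log\langle\mu\rangle$ (the logarithm coming from the $\langle\tau\rangle^{-1}$ factor, and, when $\beta=1$, from the other factor too); on the tail the integrand is $\lesssim\langle\tau\rangle^{-1-\beta}$, which integrates to $\langle\mu\rangle^{-\beta}$. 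The logarithmic factor is exactly what forces the $\beta-$ rather than $\beta$ in the statement, and it is harmless since any positive power of $\langle\mu\rangle$ absorbs a logarithm.

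For part (c) I would first complete the square, writing $n^2+c_1 n+c_2=(n+a)^2+b$ with $a=c_1/2$ and $b=c_2-c_1^2/4$, so the claim reduces to $\sum_n\langle(n+a)^2+b\rangle^{-\beta}\lesssim 1$ uniformly in the real parameters $a,b$. I then organize the sum dyadically in the size of the summand: at height $|(n+a)^2+b|\sim 2^j$ the relevant $n$ satisfy $(n+a)^2$ lying in one or two intervals of length $\lesssim 2^j$, and the number of integers with $(n+a)^2\in[P,Q]$ is controlled by the concavity bound $\sqrt{Q}-\sqrt{P}\leq\sqrt{Q-P}$, giving at most $\lesssim 2^{j/2}+1$ admissible $n$. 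Hence the dyadic contribution is $\lesssim(2^{j/2}+1)2^{-j\beta}$, and summing the geometric series over $j\geq 0$ converges precisely when $\beta>1/2$, with a bound independent of $a$ and $b$. I expect this last part to be the main obstacle: it is where the threshold $\beta>1/2$ genuinely emerges and, more importantly, where \emph{uniformity} in $c_1,c_2$ must be extracted — the completing-the-square normalization together with the square-root concavity lattice count is precisely the mechanism that delivers a bound free of the parameters.
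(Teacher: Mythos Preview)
Your proof is correct. Parts (a) and (b) are essentially identical to the paper's argument: the paper performs the same three-region split in (a) (phrased as $|n|<|m|/2$, $|n|>2|m|$, $|n|\sim|m|$, which is the same decomposition up to relabeling), and for (b) the paper simply says it ``follows from part a)'', meaning exactly the continuous analogue you wrote out.

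Part (c) is where you and the paper genuinely diverge. You complete the square and run a dyadic level-set decomposition, counting lattice points via $\sqrt{Q}-\sqrt{P}\leq\sqrt{Q-P}$. The paper instead factors the quadratic over $\C$ as $n^2+c_1n+c_2=(n+z_1)(n+z_2)$, observes $|(n+z_1)(n+z_2)|\geq|n+x_1|\,|n+x_2|$ with $x_i=\Re z_i$, disposes of the $O(1)$ many $n$ with $|n+x_i|<1$, and then bounds the remaining sum by $\sum_n\langle n+x_1\rangle^{-\beta}\langle n+x_2\rangle^{-\beta}$, which is finite by part (a) with $\gamma=\beta$ (this is exactly where $\beta>1/2$ enters, via $\beta+\gamma>1$). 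The paper's route is shorter and ties (c) back to (a); yours is self-contained and makes the $\beta>1/2$ threshold appear as the convergence condition for $\sum_j 2^{j(1/2-\beta)}$. Both deliver the required uniformity in $c_1,c_2$.
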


We will prove this lemma in a appendix.

\begin{lemma}\label{apriori} Under the conditions of Theorem~\ref{thm:main1} and Theorem~\ref{thm:main2}, we have
$$
\|\rho_1\|_{H^s}\les \|n\|_{H^{s_1}}\|u\|_{H^{s_0}}, \,\,\,\,\,\text{ if } s\leq s_0+s_1,
$$
$$
\|\rho_2\|_{H^s}\les  \|u\|_{H^{s_0}}^2, \,\,\,\,\,\text{ if } s\leq 2s_0-1,
$$
$$
\|B_1(n,u)\|_{H^s}\les \|n\|_{H^{s_1}}\|u\|_{H^{s_0}}, \,\,\,\,\,\text{ if } s\leq 1+s_0+\min(s_1,0),
$$
$$
\|B_2(u)\|_{H^s}\les  \|u\|_{H^{s_0}}^2, \,\,\,\,\,\text{ if } s\leq  \min(2s_0,1+s_0).
$$ 
\end{lemma}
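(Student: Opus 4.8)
The plan is to treat the two resonant terms $\rho_1,\rho_2$ by a direct frequency count and the two normal-form corrections $B_1,B_2$ by first extracting a quantitative lower bound on the (nonvanishing) denominators and then running a bilinear Cauchy--Schwarz argument. For $\frac1\alpha\not\in\N$ the terms $\rho_1,\rho_2$ vanish and there is nothing to prove, so for these two estimates we may take $\alpha=1$ and use the explicit formulas from Proposition~\ref{thm:dbp}. For $\rho_1(k)=n_{2k-\sgn(k)}u_{\sgn(k)-k}$ the frequencies $k_1=2k-\sgn(k)$ and $k_2=\sgn(k)-k$ satisfy $k_1+k_2=k$ and $|k_1|\sim|k_2|\sim\la k\ra$, so $\la k\ra^s\les\la k_1\ra^{s_1}\la k_2\ra^{s_0}$ whenever $s\le s_0+s_1$. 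Since $k\mapsto k_1$ is injective, squaring, summing in $k$, and using $\ell^2\hookrightarrow\ell^\infty$ in one factor gives $\|\rho_1\|_{H^s}\les\|n\|_{H^{s_1}}\|u\|_{H^{s_0}}$. The term $\rho_2(j)=|j|u_{(j+\sgn(j))/2}\overline{u_{(j-\sgn(j))/2}}$ is handled identically: both frequencies are $\sim\la j\ra$, so the extra factor $|j|$ forces the weight budget $\la j\ra^{s}|j|\sim\la j\ra^{s+1}\les\la j\ra^{2s_0}$, i.e. $s\le 2s_0-1$, after which injectivity and $\ell^2\hookrightarrow\ell^\infty$ close the bound.

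For $B_1(n,u)_k$ the crucial step is a lower bound on the denominator $\Phi_1:=\alpha k^2-\alpha k_2^2-|k_1|$. Writing $k_2=k-k_1$ one computes $k^2-k_2^2=k_1(k+k_2)$, so $\Phi_1=|k_1|\big(\alpha\,\sgn(k_1)(k+k_2)-1\big)$. Here $M:=\sgn(k_1)(k+k_2)\in\Z$, and the nonvanishing of $\alpha M-1$ (guaranteed by $\frac1\alpha\not\in\Z$, or by $\sum^*$ excluding $M=1$ when $\alpha=1$) upgrades to the quantitative bound $|\alpha M-1|\gtrsim\la M\ra=\la k+k_2\ra$: indeed $\inf_{M}\frac{|\alpha M-1|}{\la M\ra}>0$ since the ratio is bounded below for bounded $M$ and tends to $\alpha$ as $|M|\to\infty$ (the implicit constant depends on $\alpha$). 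Hence $|\Phi_1|\gtrsim|k_1|\la k+k_2\ra$ and
$$
|B_1(n,u)_k|\les\sum_{k_1+k_2=k,\,k_1\neq0}\frac{|n_{k_1}|\,|u_{k_2}|}{|k_1|\,\la k+k_2\ra}.
$$
Setting $a_{k_1}=\la k_1\ra^{s_1}|n_{k_1}|$ and $b_{k_2}=\la k_2\ra^{s_0}|u_{k_2}|$ and applying Cauchy--Schwarz in the inner sum, it suffices to bound $\sup_k S_k$ with $S_k=\la k\ra^{2s}\sum_{k_1\neq0}\big(k_1^2\la k_1\ra^{2s_1}\la k-k_1\ra^{2s_0}\la 2k-k_1\ra^2\big)^{-1}$, since then $\|B_1\|_{H^s}^2\les(\sup_k S_k)\|a\|_{\ell^2}^2\|b\|_{\ell^2}^2$. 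The two binding regions are $|k_1|\sim1$ (forcing $\la 2k-k_1\ra\sim\la k\ra$ and giving $s\le 1+s_0$) and $k_1\approx 2k$ (where $2k-k_1$ is $O(1)$ and the denominator gain $\la k+k_2\ra$ is lost, giving $s\le 1+s_0+s_1$); together these produce $s\le 1+s_0+\min(s_1,0)$, while every other region is strictly better. The remaining sums converge by Lemma~\ref{lem:sums}, any borderline case producing only a harmless $\phi_\beta$-factor below the stated threshold.

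The estimate for $B_2(u)_j=|j|\sum^*\frac{u_{j_1}\overline{u_{-j_2}}}{|j|-\alpha j_1^2+\alpha j_2^2}$ follows the same template. Factoring $j_1^2-j_2^2=(j_1-j_2)j$ gives denominator $|j|\big(1-\alpha\,\sgn(j)(j_1-j_2)\big)$, and the same argument yields $\big||j|-\alpha j_1^2+\alpha j_2^2\big|\gtrsim|j|\la j_1-j_2\ra$. The prefactor $|j|$ then cancels and, after the substitution $p=j_1,\ q=-j_2$ (so $p-q=j$ and $j_1-j_2=p+q$), one is reduced to $\sup_j\la j\ra^{2s}\sum_{p-q=j}\big(\la p\ra^{2s_0}\la q\ra^{2s_0}\la p+q\ra^2\big)^{-1}<\infty$. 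Here the region $p\approx -q$ (both frequencies $\sim\la j\ra$, $p+q$ small) gives $s\le 2s_0$, while the region with one low frequency ($q=O(1)$, $p\sim j$, hence $\la p+q\ra\sim\la j\ra$) gives $s\le 1+s_0$, yielding $s\le\min(2s_0,1+s_0)$. I expect the main obstacle to be precisely this last step: organizing the bilinear sums region-by-region so that the gain supplied by the denominator is matched against the Sobolev weights to extract exactly the stated thresholds, and verifying that no logarithmic divergence survives at the endpoints (which is where Lemma~\ref{lem:sums} and $\phi_\beta$ do the bookkeeping). By contrast the denominator lower bounds, though conceptually the heart of the matter, are short once the integer $M=\sgn(k_1)(k+k_2)$ (resp. $\sgn(j)(j_1-j_2)$) is isolated and $\frac1\alpha\not\in\Z$ is invoked.
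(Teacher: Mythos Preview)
Your proposal is correct and follows essentially the same approach as the paper: the same factorization of the denominators yielding $|\Phi_1|\gtrsim\la k_1\ra\la 2k-k_1\ra$ and $|\,|j|-\alpha j_1^2+\alpha j_2^2|\gtrsim|j|\la j_1-j_2\ra$, the same Cauchy--Schwarz reduction to a supremum of weighted sums, and a region-by-region analysis of those sums. The only difference is presentational: the paper carries out a full three-case split ($|k_1|<|k|/2$, $|k_1|>4|k|$, $|k_1|\sim|k|$, and similarly for $B_2$) and invokes Lemma~\ref{lem:sums} explicitly in each, whereas you identify the two extremal ``binding'' configurations and assert the rest are no worse---which is true, though the phrase ``strictly better'' is slightly imprecise (some intermediate regions tie with your binding regions once the admissibility constraint $s_0\le s_1+1$ is used; this is exactly how the paper passes from its raw bound $s\le 1+\min(s_0,s_1+\min(s_0,1))$ to the stated $s\le 1+s_0+\min(s_1,0)$).
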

\begin{proof}
The proof for $\rho_1$ and $\rho_2$ is immediate from their definition.

To estimate   $B_1$, first note that
$$
\big|\alpha k^2-\alpha k_2^2-|k_1|\big|=|\alpha| |k_1| |2k-k_1-\frac1\alpha \sgn(k_1)|\sim \la k_1\ra \la 2k-k_1\ra.
$$
The last equality is immediate in the case $\frac1\alpha\not\in\N$, when $\alpha =1$, it follows from the nonresonant condition. Therefore we have
$$
|B_1(n,u)_k|\les \sum_{k_1\neq 0}\frac{|n_{k_1}| |u_{k-k_1}|}{\la k_1\ra \la 2k-k_1\ra}.
$$
We estimate the $H^s$ norm as follows:
$$
\|B_1\|_{H^s}^2\lesssim \Big\|\sum_{k_1\neq 0} \la k_1\ra^{2s_1}|n_{k_1}|^2 \la k-k_1\ra^{2s_0}|u_{k-k_1}|^2\Big\|_{\ell^1_k}  \Big\|\sum_{k_1} \frac{\la k\ra^{2s}}{ \la k_1\ra^{2+2s_1}  \la k-k_1\ra^{2s_0}\la 2k-k_1\ra^2}\Big\|_{\ell^\infty_k}
$$
The first sum is bounded by $\|n\|_{H^{s_1}}^2 \|u\|_{H^{s_0}}^2$ since it is a convolution of two $\ell^1$ sequences. To estimate the second sum we distinguish the cases $|k_1|<|k|/2$, $|k_1|>4|k|$, and $|k_1|\sim|k|$. In the first case, we bound the sum by
$$
\sum_{k_1} \frac{\la k\ra^{2s-2-2s_0}}{ \la k_1\ra^{2+2s_1} }\lesssim \la k\ra^{2s-2-2s_0},
$$
since $2+2s_1>1$.
In the second case, we bound the sum by
$$
\sum_{|k_1|>4|k|} \frac{\la k\ra^{2s}}{ \la k_1\ra^{4+2s_1+2s_0} }\lesssim \la k\ra^{2s-3-2s_1-2s_0}\leq \la k\ra^{2s-2-2s_0}.
$$
In the final case, we have
$$
\sum_{|k_1|\sim|k|} \frac{\la k\ra^{2s-2-2s_1}}{\la k-k_1\ra^{2s_0}\la 2k-k_1\ra^2} \lesssim \la k\ra^{2s-2-2s_1-2\min(s_0,1)}.
$$
In the last inequality we used part a) of Lemma~\ref{lem:sums}.

Combining these cases we see that $B_1\in H^s$ for $s\leq 1+\min(s_0,s_1 +\min(s_0,1))$. In particular,
$B_1\in H^s$ if $s\leq 1+s_0+\min(s_1,0)$ which can be seen by distinguishing the cases $s_0\geq 1$ and $s_0<1$ and using the condition $1+s_1\geq s_0$.

Similarly,  we estimate
$$
|B_2(u)_j|\les   \sum_{j_1 }\frac{  |u_{j_1}| |u_{j_1-j}|}{ \la j-2j_1\ra}.
$$
As in the case of $B_1$, we see that $B_2\in H^s$ if
$$
\sup_j \sum_{j_1}\frac{\la j\ra^{2s}}{ \la j-2j_1\ra^2 \la j_1\ra^{2s_0} \la j-j_1\ra^{2 s_0}}<\infty.
$$
We distinguish the cases $|j_1|<|j|/4$, $|j_1|>2|j|$, and $|j_1|\sim|j|$. In the first case, we bound the sum by
$$
\sum_{|j_1|< |j|/4} \frac{\la j\ra^{2s-2-2s_0}}{ \la j_1\ra^{2s_0} }\lesssim \la j\ra^{2s-2-2s_0 }\phi_{2s_0}(j).
$$
In the second case, we bound the sum by
$$
\sum_{|j_1|>2|j|} \frac{\la j\ra^{2s}}{ \la j_1\ra^{2+4s_0} }\lesssim \la j\ra^{2s-1 -4s_0}.
$$
In the final case, we have
$$
\sum_{|j_1|\sim|j|} \frac{\la j\ra^{2s- 2s_0}}{\la j-2j_1\ra^{2}\la j-j_1\ra^{2s_0}} \lesssim \la j\ra^{2s-2s_0-2\min(s_0,1)}.
$$
Combining this cases, we see that $B_2$ is in $H^s$ if $s\leq \min(2s_0,1+s_0)$.
\end{proof}

Using the estimates in Lemma~\ref{apriori} in the equations \eqref{new_u} and \eqref{new_n} after writing the equations in the space side, we obtain
\begin{multline}\label{udelta}
\|u(t)-e^{it\alpha \partial_x^2 }u_0\|_{H^{s_0+a_0}}\lesssim  \|n_0\|_{H^{s_1}}\|u_0\|_{H^{s_0}}+
\|n(t)\|_{H^{s_1}}\|u(t)\|_{H^{s_0}}\\ + \int_0^t \|n(s)\|_{H^{s_1}}\|u(s)\|_{H^{s_0}} ds + \Big\|\int_0^t e^{i\alpha (t-s)\partial_x^2}\big[R_1(u)(s)+R_2(u,n)(s)\big] ds\Big\|_{H^{s_0+a_0}},
\end{multline}
\begin{multline}\label{ndelta}
\|n(t)-e^{-itd}n_0\|_{H^{s_1+a_1}} \les \|u_0\|_{H^{s_0}}^2+\|u(t)\|_{H^{s_0}}^2 \\ + \int_0^t\|u(s)\|_{H^{s_0}}^2 ds  + \Big\|\int_0^t e^{-id(t-s)}\big[R_3(u,n)(s)+R_4(u,n)(s)\big] ds\Big\|_{H^{s_1+a_1}},
\end{multline}
where
$$ R_\ell(s)=\sum_{k} R_\ell(\widehat k,s) e^{ikx},\,\,\,\,\ell=1,2,3,4.$$
Above, the smoothing indexes $a_0$ and $a_1$ depend  on $\alpha$ as stated in Theorem~\ref{thm:main1} and Theorem~\ref{thm:main2}. The dependence arise only from the contribution of the resonant terms $\rho_1$ and $\rho_2$. 

Note that, with $\delta$   as in Theorem~\ref{thm:takaoka},
\begin{multline} \label{udelta2}
 \Big\|\int_0^t e^{i\alpha (t-s)\partial_x^2}\big[R_1(u)(s)+R_2(u,n)(s)\big] ds\Big\|_{L^\infty_{t\in[-\delta,\delta]} H^{s_0+a_0}_x} \\
  \lesssim \Big\|\psi_\delta(t) \int_0^t e^{i\alpha (t-s)\partial_x^2}\big[R_1(u)(s)+R_2(u,n)(s)\big] ds\Big\|_{X^{s_0+a_0,b} }
 \lesssim \|R_1(u)+R_2(u,n)\|_{X^{s_0+a_0,b-1}_\delta},
\end{multline}
for $b>1/2$.
Here we used the imbedding $X^{s_0+a_0,b}\subset L^\infty_t H^{s_0+a_0}_x$. Similarly,
\begin{multline} \label{ndelta2}
  \Big\|\int_0^t e^{-id(t-s)}\big[R_3(u,n)(s)+R_4(u,n)(s)\big] ds\Big\|_{L^\infty_{t\in[-\delta,\delta]} H^{s_1+a_1}_x}  \\
 \lesssim \|R_3(u,n)+R_4(u,n)\|_{X^{s_1+a_1,b-1}_\delta}.
\end{multline}
\begin{prop}\label{prop:schroxsb}
Given $s_1>-\frac12$, $\max(s_1,\frac{s_1}2+\frac14)\leq s_0 \leq s_1+1$, and $\frac12<b<\min(\frac34,\frac{s_0+1}2)$, we have
$$
\|R_1(u)\|_{X^{s,b-1}}\lesssim  \|u\|_{X^{s_0,\frac12}}^3,
\,\,\,\,\text{provided } s\leq s_0+\min(1,2s_0).$$
We also have
$$\|R_2(u,n)\|_{X^{s,b-1}}\lesssim \|n\|^2_{Y^{s_1,\frac12}} \|u\|_{X^{s_0,\frac12}},
$$
provided $s\leq
\min(s_0+1+2s_1,s_0+1,3+2s_1-2b,3+s_1-2b)$.
\end{prop}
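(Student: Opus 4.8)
The plan is to establish both bounds by the standard duality argument for $X^{s,b}$ spaces, using the decay built into the denominators of $R_1$ and $R_2$ as the source of the smoothing. Since $b-1<0$, I would first dualize: writing $\|R_1(u)\|_{X^{s,b-1}}=\sup|\la R_1(u),w\ra|$ over $w$ with $\|w\|_{X^{-s,1-b}}\le 1$, and likewise for $R_2$ (both paired against a Schr\"odinger test function, since they sit on the right-hand side of the $u$-equation), reduces the problem to a weighted multilinear estimate on the space--time Fourier side. Introducing the nonnegative weighted profiles $f_i(k_i,\tau_i)=\la k_i\ra^{s_0}\la\tau_i-\alpha k_i^2\ra^{1/2}|\widehat u(k_i,\tau_i)|$ for the Schr\"odinger factors, $g_i(k_i,\tau_i)=\la k_i\ra^{s_1}\la\tau_i\mp|k_i|\ra^{1/2}|\widehat n(k_i,\tau_i)|$ for the wave factors, and $h=\la k\ra^{-s}\la\tau-\alpha k^2\ra^{1-b}|\widehat w|$, the task becomes to bound the corresponding weighted convolution integral of these profiles by the product of their $\ell^2_kL^2_\tau$ norms.

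The key structural input is the algebraic resonance identity: on the support of each term the sum of the signed modulation weights equals the resonance function, namely $\alpha(k^2-k_1^2+k_2^2-k_3^2)$ for $R_1$ and $\alpha k^2-\alpha k_3^2-|k_1|-|k_2|$ for $R_2$ (with $k=k_1+k_2+k_3$). Hence the largest modulation always dominates the resonance function. Simultaneously, as computed in the proof of Lemma~\ref{apriori}, the $R_1$ denominator is $\sim\la k_1+k_2\ra\la 2k-k_1-k_2\ra$, so after cancelling the numerator factor $|k_1+k_2|$ it leaves a deterministic spatial gain $\la 2k-k_1-k_2\ra^{-1}$, while the $R_2$ denominator is $\sim\la k_1\ra\la 2k-k_1\ra$. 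This denominator decay is precisely what yields the smoothing indices $a_0$ and $a_1$; the residual oscillation is then absorbed through the modulation weights.

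I would then split into cases according to which of the modulation variables (output and inputs) is maximal. When an input modulation is largest I spend part of its $\la\cdot\ra^{1/2}$ weight against the resonance function; when the output modulation is largest I have only the negative power $\la\tau-\alpha k^2\ra^{b-1}$ to work with and must instead draw the needed gain entirely from the denominator. Having fixed the dominant modulation, I would apply Cauchy--Schwarz in the residual $\tau$ variables, integrate them out using Lemma~\ref{lem:sums}(b), and then Cauchy--Schwarz in $k$, reducing everything to a supremum of a purely spatial sum of the type controlled by parts (a) and (c) of Lemma~\ref{lem:sums}. Matching the surviving power of $\la k\ra$ against $\la k\ra^{2s}$ in each case produces the admissible range of $s$, the worst case furnishing the minimum in the hypothesis.

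The main obstacle is the output-high-modulation regime for $R_2$. There $b-1<0$ renders the output weight useless, so the only resources against the derivative loss and the resonance are the denominator $\la k_1\ra\la 2k-k_1\ra$ together with the two wave modulation weights of size $\la\cdot\ra^{1/2}$; the deficit of $2(1-b)$ incurred by carrying $\la\tau-\alpha k^2\ra^{b-1}$ instead of a genuine half-power is exactly what forces the constraints $s\le 3+2s_1-2b$ and $s\le 3+s_1-2b$. Verifying that the resulting spatial sums still converge in this regime---using that $b$ may be taken arbitrarily close to $\frac12$ from above and that the admissibility hypothesis gives $s_0>\frac{s_1}2+\frac14$---is the delicate bookkeeping, and is where the $\alpha$-admissibility of $(s_0,s_1)$ is genuinely needed.
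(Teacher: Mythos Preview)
Your approach is viable, but it is more elaborate than the paper's, which avoids both the duality pairing and the case split according to the largest modulation. The paper works directly with the $\ell^2_k L^2_\tau$ norm defining $X^{s,b-1}$: a single Cauchy--Schwarz in the input variables $(k_1,k_2,\tau_1,\tau_2)$ bounds $\|R_2\|_{X^{s,b-1}}^2$ by $\|f^2*f^2*g^2\|_{\ell^1 L^1}\cdot\sup_{k,\tau}\int\sum M^2$, the first factor being $\|f\|_2^4\|g\|_2^2$ by Young. After integrating $\tau_1,\tau_2$ out of the supremum via Lemma~\ref{lem:sums}(b), two modulation brackets remain in the denominator---the output $\la\tau-\alpha k^2\ra^{2-2b}$ and the collapsed input $\la\tau-|k_1|-|k_2|-\alpha(k-k_1-k_2)^2\ra^{1-}$---and the elementary inequality $\la\tau-n\ra\la\tau-m\ra\gtrsim\la n-m\ra$ converts their product into the single resonance weight $\la\alpha k^2-|k_1|-|k_2|-\alpha(k-k_1-k_2)^2\ra^{2-2b}$, eliminating $\tau$ without ever asking which modulation dominates. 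What remains is one purely spatial sum, analyzed case by case in the \emph{frequency} variables (not the modulations) via Lemma~\ref{lem:sums}(a),(c). The same scheme treats $R_1$.

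Your modulation case-split would recover the same spatial sums in the end, but with several parallel subcases to track; the paper's device short-circuits this. Note also that your description of the output-high regime is slightly off: the weight $\la\tau-\alpha k^2\ra^{b-1}$ is a \emph{gain} (since $b-1<0$ it sits in the numerator of $M$), equivalently $\la\tau-\alpha k^2\ra^{1-b}$ of decay is available, and it is precisely this factor---not the denominator alone---that produces the resonance power $2-2b$ responsible for the constraints $s\le 3+2s_1-2b$ and $s\le 3+s_1-2b$.
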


\begin{prop}\label{prop:wavexsb}
Given $s_1>-\frac12$, $\max(s_1,\frac{s_1}2+\frac14)\leq s_0 \leq s_1+1$, and $\frac12<b<\frac34 +\min(0, \frac{s_0+s_1}2)$, we have
$$
\|R_3(u,n)\|_{X^{s,b-1}} + \|R_4(u,n)\|_{X^{s,b-1}} \lesssim  \|n\|_{Y^{s_1,\frac12}} \|u\|^2_{X^{s_0,\frac12}},
\,\,\,\,  $$
provided  $s\leq s_1+\min(1,2s_0,2s_0-s_1)$.
\end{prop}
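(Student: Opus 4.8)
The plan is to prove Proposition~\ref{prop:wavexsb} by duality, reducing each of $\|R_3\|_{X^{s,b-1}}$, $\|R_4\|_{X^{s,b-1}}$ (with the wave weight $\la\tau-|j|\ra$, since these force the $n_+$ equation) to a weighted quadrilinear convolution bound, and then extracting the claimed smoothing from three independent gains: the denominators $D_3,D_4$, the dispersive modulation weights, and frequency localization. Since these terms are trilinear in $(n,u,\overline u)$, I would first write $\|R_3\|_{X^{s,b-1}}=\sup|\langle R_3,g\rangle|$ over $g$ with $\|g\|_{X^{-s,1-b}}\le 1$, pass to absolute values of the space-time Fourier transforms, and let $f_0,f_1,f_2,f_3\in L^2$ carry the norms of $g$, of $n$ (in $Y^{s_1,\frac12}$), and of the two copies of $u$ (in $X^{s_0,\frac12}$). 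In the variables $p$ (for $n_p$), $q,r$ (for $u_q$ and $\overline{u_r}$, where $r=p+q-j$), the task becomes to bound $\sum\int M\,f_0f_1f_2f_3\les\|f_0\|_2\|f_1\|_2\|f_2\|_2\|f_3\|_2$, where
$$M=\frac{|j|\,\la j\ra^{s}}{|D_3|\,\la p\ra^{s_1}\la q\ra^{s_0}\la r\ra^{s_0}\,\la\tau-|j|\ra^{1-b}\la\tau_1-|p|\ra^{1/2}\la\tau_2-\alpha q^2\ra^{1/2}\la\tau_3-\alpha r^2\ra^{1/2}}.$$

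There are two algebraic inputs. First, writing $D_3=|j|-\alpha(p+q)^2+\alpha r^2=|j|-\alpha j(p+q+r)$, the nonresonance hypothesis forces $|D_3|\gtrsim_\alpha|j|\,\la p+q+r\ra$, so the dangerous numerator $|j|$ is absorbed and one even gains a power of $\la p+q+r\ra$. Second, with $\tau=\tau_1+\tau_2-\tau_3$ the four modulations satisfy the identity
$$(\tau-|j|)-(\tau_1-|p|)-(\tau_2-\alpha q^2)+(\tau_3-\alpha r^2)=|p|-|j|+\alpha q^2-\alpha r^2=:h,$$
so that $\max$ of the four brackets is $\gtrsim\la h\ra$; moreover $h=|p|-|j|+\alpha(j-p)(q+r)$, which is large unless $|j-p|$ is small. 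I would integrate out the time variables first, using Lemma~\ref{lem:sums}(b) to perform the $\tau_1,\tau_2$ integrals (the convolution constraint eliminating $\tau_3$), converting the product of modulation weights into a gain $\la h\ra^{-\theta}$ with $\theta<1-b$; this is precisely where the restriction $b<\frac34+\min(0,\frac{s_0+s_1}2)$ is consumed. After Cauchy--Schwarz in $\tau$ this reduces everything to a frequency sum.

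The decisive step is that frequency sum. After Cauchy--Schwarz in $(p,q)$ (with $r=p+q-j$), matters rest on showing
$$\sup_{j}\ \la j\ra^{2s}\sum_{p,q}\frac{1}{|D_3|^2\,\la h\ra^{2\theta}\,\la p\ra^{2s_1}\la q\ra^{2s_0}\la r\ra^{2s_0}}\les 1$$
together with the symmetric sum in which the summation and fixed indices are exchanged. I would split into regions by the relative sizes of $p,q,r,j$ (comparable frequencies, and the high--low cases where one of $|q|,|r|$ dominates), and within each by which modulation realizes $\la h\ra$, then sum the free variable with Lemma~\ref{lem:sums}(a),(c), using $|D_3|^2\gtrsim|j|^2\la p+q+r\ra^2$. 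I expect convergence in every region to force exactly $s\le s_1+\min(1,2s_0,2s_0-s_1)$: the $2s_0$ from the high--high $u$--$u$ interaction, the $2s_0-s_1$ from transferring $s_1$ derivatives off the wave factor, and the $+1$ from the $\la p+q+r\ra$ gain in $D_3$. The estimate for $R_4$ is structurally identical, with $D_3$ replaced by $D_4=|j|-\alpha q^2+\alpha(j-q)^2\sim_\alpha|j|\la j-2q\ra$ and $n$ by $\overline n$; the same case analysis applies.

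The main obstacle I anticipate is the near-resonant regime in which $|j-p|$ is small, so that $\la h\ra$ provides no help, \emph{or} $|p+q+r|$ is bounded, so that $D_3$ supplies only the single factor $|j|$. The key structural fact that must be verified is that $h$ and $D_3$ do not degenerate simultaneously: when $j\approx p$ the frequency balance forces $q\approx r$, whence $p+q+r\approx j+2q$ is large and $D_3$ recovers the loss, while when $|p+q+r|$ is bounded the factor $\alpha(j-p)(q+r)$ in $h$ becomes large. Carefully quantifying this complementarity across the high--low splittings, and checking that the surviving gain always suffices to meet the threshold $s_1+\min(1,2s_0,2s_0-s_1)$, is the heart of the argument and is also what pins down the upper bound on $b$.
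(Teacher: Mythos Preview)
Your reduction is essentially the paper's: Cauchy--Schwarz plus the $\tau_1,\tau_2$ integration (Lemma~\ref{lem:sums}(b)) collapse the estimate to exactly the paper's frequency sum. In the paper's variables $n=j-j_1-j_2=-r$, $m=j_2=q$, your sum becomes \eqref{sumwave}, and the paper then splits by the relative sizes of $|n|,|m|,|j|$ and sums using parts (a) and (c) of Lemma~\ref{lem:sums}. So the architecture is right.

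The one genuine problem is the ``key structural fact'' you propose to verify: that $h$ and $D_3$ cannot degenerate simultaneously. This is false. Take $p=j$ and $q=-j/2$ (so $r=q$). Then $p+q+r=j+2q=0$, so $|D_3|\sim|j|$ with no extra gain, while $h=|p|-|j|+\alpha(j-p)(q+r)=0+0=0$. Your claimed dichotomy (``when $j\approx p$ then $p+q+r\approx j+2q$ is large''; ``when $|p+q+r|$ is bounded then $(j-p)(q+r)$ is large'') breaks down precisely here, since $j+2q$ need not be large and $j-p$ can vanish.

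The actual mechanism the paper exploits is different and does not need complementarity. The point is that the resonance function
\[
h=\alpha n^2-\alpha m^2+|j|-|j-n-m|
\]
is a \emph{quadratic polynomial in $m$} for each fixed $n,j$. Hence Lemma~\ref{lem:sums}(c) controls the $m$--sum uniformly, with no help from the size of $h$ whatsoever. After that, the factor $\la 2n-j\ra^{-2}$ coming from $D_3$ is already $\ell^1$ in $n$, and Lemma~\ref{lem:sums}(a) closes the $n$--sum. In the worst (doubly degenerate) configuration above one has $|p|,|q|,|r|\sim|j|$, and the bare weight $\la j\ra^{2s-2s_1-4s_0}$ already meets the target range; the volume of such $(p,q)$ is $O(1)$. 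If you reorganize your case analysis around ``sum the quadratic variable by part~(c), then the remaining variable by part~(a) using $\la 2n-j\ra^{-2}$'', the argument goes through and matches the paper's.
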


We will prove these propositions later on. Using \eqref{udelta2}, \eqref{ndelta2} and the propositions  above (with $b-1/2$ sufficiently small depending on $a_0, a_1, s_0, s_1$) in
\eqref{udelta} and \eqref{ndelta}, we see that for $t\in[-\delta ,\delta ]$, we have
\begin{multline*}
\|u(t)-e^{it\alpha \partial_x^2 }u_0\|_{H^{s_0+a_0}}+\|n(t)-e^{-itd}n_0\|_{H^{s_1+a_1}}
\lesssim  \big[\|n_0\|_{H^{s_1}}+\|u_0\|_{H^{s_0}}\big]^2+\\
\big[\|n(t)\|_{H^{s_1}}+\|u(t)\|_{H^{s_0}}\big]^2+\int_0^t \big[\|n(s)\|_{H^{s_1}}+\|u(s)\|_{H^{s_0}}\big]^2 ds
+ \big[\|n\|_{Y^{s_1,\frac12}} + \|u\|_{X^{s_0,\frac12}}\big]^3.
\end{multline*}
In the rest of the proof the implicit constants depend on $\|n_0\|_{H^{s_1}}, \|u_0\|_{H^{s_0}}$.
Fix $T$ large.  For $t\leq T$, we have the bound (with $\gamma=\gamma(s_0,s_1)$)
$$\|u(t)\|_{H^{s_0}}+\|n(t)\|_{H^{s_1}}\lesssim (1+|t|)^\gamma \les T^\gamma.$$
Thus, with $\delta \sim T^{-12\gamma-}$, we have
$$
\|u(j\delta)-e^{i\delta \alpha \partial_x^2 }u((j-1)\delta)\|_{H^{s_0+a_0}}+\|n(j\delta)-e^{-i\delta d}n((j-1)\delta)\|_{H^{s_1+a_1}}  \lesssim T^{3\gamma},
$$
for any $j\leq T/\delta\sim T^{1+12\gamma+}$. Here we used the local theory bound
$$
\|u\|_{X^{s_0,1/2}_{[(j-1)\delta,\,j\delta]}} \lesssim \|u((j-1)\delta)\|_{H^{s_0}}\lesssim T^\gamma,
$$
and similarly for $n$. Using this we obtain (with $J=T/\delta\sim T^{1+12\gamma+}$)
\begin{align*}
\|u(J\delta)-e^{i\alpha J\delta\partial_x^2}u(0)\|_{H^{s_0+a_0}}&\leq \sum_{j=1}^J\|e^{i(J-j) \delta\alpha\partial_x^2}u(j\delta)-e^{i(J-j+1) \delta \alpha \partial_x^2}u((j-1)\delta)\|_{H^{s_0+a_0}}\\
&= \sum_{j=1}^J\| u(j\delta)-e^{ i\delta\alpha\partial_x^2}u((j-1)\delta)\|_{H^{s_0+a_0}}\\
&\lesssim J T^{3\gamma} \sim T^{1+15\gamma+}.
\end{align*}
The analogous bound follows similarly for the wave part $n$.

The continuity in $H^{s_0+a_0}\times H^{s_1+a_1}$ follows from dominated convergence theorem, the continuity of $u$ and $n$ in $H^{s_0}$ , $H^{s_1}$, respectively, and from the embedding $X^{s,b}\subset C^0_tH^s_x$ (for $b>1/2$). For details, see \cite{et}.
\section{Proof of Proposition~\ref{prop:schroxsb}}
First note that the denominator in the definition of $R_1$ satisfy
\begin{multline}\label{denom1}
|\alpha k^2-\alpha (k-k_1-k_2)^2-|k_1+k_2||=|\alpha| |k_1+k_2| |2k-k-k_1-\frac1\alpha \sgn(k_1+k_2)|
\\ \sim
\la k_1+k_2\ra \la 2k-k_1-k_2\ra.
\end{multline}
The last equality holds trivially in the case $1/\alpha$ is not an integer. In the case $\frac1\alpha$ is an integer it holds since the sum is over the nonresonant terms. Similarly, the denominators of $R_2$, $R_3$, $R_4$ are comparable to
\be \label{denom234}
\la k_1\ra \la 2k-k_1\ra,\,\,\,\,\,\la j\ra \la j-2j_1-2j_2\ra,\,\,\,\,\,\la j\ra \la j-2j_2\ra,
\ee
respectively.

We start with the proof for $R_2$. We have
$$
\|R_2(u,n)\|_{X^{s,b-1}}^2 =\Big\|\int_{\tau_1,\tau_2}\sum_{k_1,k_2\neq 0}^*\frac{\la k \ra^s \widehat n(k_1,\tau_1)\widehat n(k_2,\tau_2)\widehat u(k-k_1-k_2,\tau-\tau_1-\tau_2)}{(\alpha k^2-\alpha (k-k_1)^2 -|k_1|)\la \tau-k^2\ra^{1-b}}\Big\|_{\ell^2_kL^2_\tau}^2.
$$
Let
$$
f(k,\tau)=|\widehat n(k,\tau)| \la k\ra^{s_1} \la \tau-|k|\ra^{\frac12},
\,\,\,\,\,\,\,\,\,\,\,
g(k,\tau)=|\widehat u(k,\tau)| \la k\ra^{s_0} \la \tau-\alpha  k^2\ra^{\frac12}.
$$
It suffices to prove that
$$
\Big\|\int_{\tau_1,\tau_2}\sum_{k_1,k_2\neq 0}^* M(k_1,k_2,k,\tau_1,\tau_2,\tau) f(k_1,\tau_1)f(k_2,\tau_2)g(k-k_1-k_2,\tau-\tau_1-\tau_2) \Big\|_{\ell^2_kL^2_\tau}^2\lesssim \|f\|_2^4 \|g\|_2^2,
$$
where
\begin{multline}\nn
M(k_1,k_2,k,\tau_1,\tau_2,\tau)=\\
\frac{\la k \ra^s\la k_1\ra^{-s_1} \la k_2\ra^{-s_1} \la k-k_1-k_2\ra^{-s_0}  }{ (\alpha k^2-\alpha (k-k_1)^2 -|k_1|)\la \tau-\alpha  k^2\ra^{1-b}\la \tau_1-|k_1|\ra^{\frac12}\la \tau_2-|k_2|\ra^{\frac12}\la \tau-\tau_1-\tau_2-\alpha  (k-k_1-k_2)^2\ra^{\frac12}}.
\end{multline}
By Cauchy Schwarz in $\tau_1,\tau_2,k_1,k_2$ variables, we estimate the norm above by
\begin{multline*}
\sup_{k,\tau}\Big( \int_{\tau_1,\tau_2}\sum_{k_1,k_2\neq 0}^* M^2(k_1,k_2,k,\tau_1,\tau_2,\tau) \Big)\times \\ \Big\|\int_{\tau_1,\tau_2}\sum_{k_1,k_2\neq 0}  f^2(k_1,\tau_1)f^2(k_2,\tau_2)g^2(k-k_1-k_2,\tau-\tau_1-\tau_2) \Big\|_{\ell^1_kL^1_\tau}.
\end{multline*}
Note that the norm above is equal to
$\big\|  f^2*f^2*g^2 \big\|_{\ell^1_kL^1_\tau}$, which can be estimated by $ \|f\|_2^4\|g\|_2^2 $
by Young's inequality. Therefore, it suffices to prove that the supremum above is finite.

Using part b) of Lemma~\ref{lem:sums} in $\tau_1$ and $\tau_2$ integrals, we obtain
\begin{align*}
&\sup_{k,\tau} \int_{\tau_1,\tau_2}\sum_{k_1,k_2\neq 0}^* M^2 \lesssim \\
&\sup_{k,\tau}  \sum_{k_1,k_2\neq 0}^*\frac{\la k \ra^{2s} \la k_1\ra^{-2s_1} \la k_2\ra^{-2s_1} \la k-k_1-k_2\ra^{-2s_0}  }{ (\alpha k^2-\alpha (k-k_1)^2 -|k_1|)^2 \la \tau-\alpha k^2\ra^{2-2b} \la \tau-|k_1|-|k_2|-\alpha (k-k_1-k_2)^2\ra^{1-}}\\
&\lesssim \sup_{k }  \sum_{k_1,k_2\neq 0}\frac{\la k \ra^{2s} \la k_1\ra^{-2s_1} \la k_2\ra^{-2s_1} \la k-k_1-k_2\ra^{-2s_0}  }{\la k_1\ra^2 \la 2k-k_1 \ra^2  \la \alpha k^2-|k_1|-|k_2|-\alpha (k-k_1-k_2)^2\ra^{2-2b}}.
\end{align*}
The last line follows by  \eqref{denom234} and by the simple fact
\be\label{trivial}
\la \tau-n\ra \la \tau-m\ra \gtrsim \la n-m\ra.
\ee
Setting $k_2=n+k-k_1$, we rewrite the sum as
$$
\sup_{k }  \sum_{k_1\geq 0,n}\frac{\la k \ra^{2s}   \la n+k-k_1\ra^{-2s_1}   }{ \la k_1\ra^{2+2s_1}  \la 2k-k_1\ra^2 \la n\ra^{2s_0} \la \alpha (n^2-k^2) +k_1+|k_1-n-k| \ra^{2-2b}}.
$$
Here, without loss of generality (since $(k_1,k_2,k)\to (-k_1,-k_2,-k)$ is a symmetry for the sum), we only considered the case $k_1\geq 0$.\\
Case i) $-1/2<s_1<0$, $0<\frac{s_1}2+\frac14\leq s_0\leq s_1+1$.\\
We write the sum as
$$
\sum_{\stackrel{|n|\sim |k|}{k_1\geq 0}} +\sum_{\stackrel{|n|\ll |k|}{0\leq k_1\leq |n+k|}} +
\sum_{\stackrel{|n|\ll |k|}{  k_1\geq |n+k|}}+ \sum_{\stackrel{|n| \gg |k|}{ k_1\geq |n+k|}}+\sum_{\stackrel{|n| \gg |k|}{ 0\leq k_1\leq |n+k|}} =:S_1+S_2+S_3+S_4+S_5.
$$
Note that in the sum $S_1$, we have
$$
\la n\ra\sim \la k\ra,\,\,\, \la n+k-k_1\ra\les \la k_1\ra + \la 2k-k_1\ra.
$$
Using this, we have
$$
S_1\lesssim  \sum_{ k_1\geq 0 ,n} \frac{\la k \ra^{2s-2s_0} \big(\la k_1\ra^{-2s_1}+\la 2k-k_1\ra^{-2s_1}\big)  }{ \la k_1\ra^{2+2s_1}  \la 2k-k_1\ra^2    \la \alpha (n^2-k^2) +k_1+|k_1-n-k| \ra^{ 2-2b}  }. $$
Summing in $n$ using part c) of Lemma~\ref{lem:sums} and then summing in $k_1$ using
 part a) of Lemma~\ref{lem:sums}, we obtain
$$
S_1
\lesssim \la k \ra^{2s-2s_0-2-4s_1}+\la k \ra^{2s-2s_0-2-2s_1}\lesssim \la k \ra^{2s-2s_0-2-4s_1}.
$$
Note that $S_1$ is bounded in $k$ for $s\leq s_0 +1+2s_1$.

In the case of $S_2$, we have
$$|n\pm k|\sim|k|,\,\,\,\,\,\, |2k-k_1|\sim |k|,\,\,\,\,|n+k-k_1|\les |k|.$$
Also note that (since we can assume that $|k|\gg 1$)
$$
 \big|\alpha (n^2-k^2) +k_1+|k_1-n-k|\big|= \alpha (k^2-n^2)+O(|k|)\sim k^2.
$$
Using these, and then summing in $k_1$, we have
\be\nn
S_2\lesssim \sum_{\stackrel{|n|\ll |k|}{0\leq k_1\leq |n+k|}}
\frac{\la k \ra^{2s-6+4b-2s_1}     }{ \la k_1\ra^{2+2s_1}    \la n\ra^{2s_0}}   \lesssim \la k \ra^{2s-6-2s_1+4b}
\phi_{2s_0}(k)
\ee
Note that  $S_2$ is bounded in $k$ if $s< \min(s_0+\frac52+s_1-2b,3+ s_1-2b)$, and in particular,  if
$s\leq \min(s_0+1+2s_1,3+ 2s_1-2b)$.

In the case of $S_3$, we have $k_1\geq |n+k|\gtrsim |k|$.
Using this we estimate
\begin{align*}
S_3& \lesssim \sum_{\stackrel{|n|\ll |k|}{  k_1\geq |n+k|}}
\frac{\la k \ra^{2s-2-4s_1}     }{ \la 2k-k_1\ra^{2} \la n\ra^{2s_0} \la \alpha (n^2-k^2) +2k_1-n-k \ra^{2-2b}} \\ & \lesssim \sum_{ |n|\ll |k| }
\frac{\la k \ra^{2s-2-4s_1}     }{  \la n\ra^{2s_0} \la \alpha (n^2-k^2) +3k -n \ra^{2-2b}}.
\end{align*}
The second inequality follows from part a) of Lemma~\ref{lem:sums}.
Note that
$$
 \la \alpha(n^2-k^2)+3k-n\ra \sim k^2,
$$
since   $|n|\ll | k| $. Using this and then summing in $n$, we have
\begin{align*}
S_3&\lesssim \la k \ra^{2s-6-4s_1+4b}  \phi_{2s_0}(k).
\end{align*}
Note that this is also bounded in $k$  if
$s\leq \min(s_0+1+2s_1,3+ 2s_1-2b)$.

In the case of $S_4$, we have $k_1\gg |k|$. Therefore
\begin{align*}
S_4&\lesssim  \sum_{  |n|, k_1 \gg |k|}
\frac{\la k \ra^{2s-2s_0}     }{ \la k_1\ra^{4+4s_1}    \la \alpha (n^2-k^2) +2k_1-n-k \ra^{2-2b}}\\
&\lesssim  \sum_{k_1\gg |k|}\frac{\la k \ra^{2s-2s_0}     }{ \la k_1\ra^{4+4s_1} }
 \lesssim \la k \ra^{2s-2s_0-3-4s_1}.
\end{align*}
We used part c)   of Lemma~\ref{lem:sums} in the second inequality.

In the case of $S_5$, we have $|n+k-k_1|\les |n|$ and
$$
 \big|\alpha (n^2-k^2) +k_1+|k_1-n-k|\big|= \alpha (k^2-n^2)+O(|n|)\sim n^2.
$$
Thus, we estimate using part a) of Lemma~\ref{lem:sums}
\begin{align*}
S_5 \lesssim  \sum_{  |n|\gg |k|, k_1} \frac{\la k \ra^{2s}     }{ \la k_1\ra^{2+2s_1} \la 2k-k_1\ra^2   \la n\ra^{2s_0+2s_1+4-4b}}\lesssim \la k \ra^{2s-2s_0-5-4s_1+4b}.
\end{align*}
Note that to sum in $n$ we need $2s_0+2s_1+4-4b>1$, which holds under the conditions of the proposition.

Case ii) $0\leq s_1$,  $\max(s_1,\frac{s_1}2+\frac14)\leq s_0 \leq s_1+1$.\\
We write the sum as
$$
\sum_{k_1\geq 0, \,|n|\gtrsim |k|} +\sum_{|n|\ll|k|,\,0\leq k_1\ll k^2}  +
\sum_{|n|\ll|k|,\, k_1\gtrsim k^2} =:S_1+S_2+S_3.
$$
In the case of $S_1$ we have
$$
S_1\lesssim \sum_{k_1\geq 0,\, |n|\gtrsim |k|}\frac{\la k \ra^{2s-2s_0}    }{ \la k_1\ra^{2+2s_1}  \la 2k-k_1\ra^2   \la \alpha (n^2-k^2) +k_1+|k_1-n-k| \ra^{2-2b}}\lesssim \la k \ra^{2s-2s_0-2}.
$$
We obtained  the second inequality by first summing in $n$ using   part c) of Lemma~\ref{lem:sums}, and then in $k_1$ using part a) of the Lemma. Thus $S_1$ is bounded in $k$ if $s\leq s_0+1$.

In the case of $S_2$, we have
$$
\la \alpha (n^2-k^2) + k_1+|k_1-n-k| \ra\gtrsim k^2, \text{ and } \la k_1\ra \la n+k-k_1\ra \gtrsim \la n+k\ra \gtrsim \la k\ra.
$$
Therefore,
\begin{align*}
S_2 \lesssim \la k\ra^{2s-4+4b-2s_1}\sum_{|n|\ll|k|,\,0\leq k_1\ll k^2}\frac{1}{\la k_1\ra^{2 }  \la 2k-k_1\ra^2 \la n\ra^{2s_0}} \lesssim \la k\ra^{2s-6+4b-2s_1}
\phi_{2s_0}(k).
\end{align*}
Note that $S_2$ is bounded in $k$ if $s\leq \min(s_0+1,s_1+3-2b)$.

Finally we estimate $S_3$ as follows
\begin{align*}
S_3&\lesssim \sum_{|n|\ll|k|,\, k_1\gtrsim k^2} \frac{\la k \ra^{2s}       }{ \la k_1\ra^{4+4s_1}   \la \alpha (n^2-k^2) +k_1+|k_1-n-k| \ra^{2-2b}}\\& \lesssim \la k \ra^{2s-6-8s_1}     \sum_n \frac{1  }{   \la \alpha (n^2-k^2) +k_1+|k_1-n-k| \ra^{2-2b}}\lesssim \la k \ra^{2s-6-8s_1}.
\end{align*}
In the last inequality we used part c) of Lemma~\ref{lem:sums}. Note that this term is bounded in $k$ if
$s\leq s_0+1$.

We now consider $R_1$. By using Cauchy Schwarz, the convolution structure, and then integrating in $\tau_1, \tau_2$ as in the previous case, it suffices to prove that
$$
\sup_k \sum_{k_1,k_2 }^*\frac{\la k \ra^{2s} \la k_1\ra^{-2s_0} \la k_2\ra^{-2s_0} \la k-k_1-k_2\ra^{-2s_0}  |k_1+k_2|^2}{ (\alpha k^2-\alpha (k-k_1-k_2)^2-|k_1+k_2|)^2    \la   k^2-k_1^2+k_2^2- (k-k_1-k_2)^2\ra^{2-2b}}<\infty.
$$
Recalling \eqref{denom1}, and using
$$\la   k^2-k_1^2+k_2^2- (k-k_1-k_2)^2\ra\sim \la (k_1+k_2) (k-k_1)\ra,$$
it suffices to prove that
$$
\sup_k \sum_{k_1,k_2 }^* \frac{\la k \ra^{2s} \la k_1\ra^{-2s_0} \la k_2\ra^{-2s_0} \la k-k_1-k_2\ra^{-2s_0}  }{ \la 2k-k_1-k_2\ra^2   \la (k_1+k_2)(k-k_1)\ra^{2-2b}}<\infty.
$$
Note that the contribution of the case $k_1=k$ is
$$
\lesssim \sum_{k_2 } \frac{\la k \ra^{2s-2s_0}   }{ \la  k- k_2\ra^2  \la k_2\ra^{4s_0}  }\lesssim
\la k\ra^{2s-2s_0-\min(2,4s_0)},
$$
and hence it satisfies the claim. For $k_1\neq k$ (since we also have $k_1+k_2\neq 0 $ by nonresonant condition), we have $\la (k_1+k_2)(k-k_1)\ra\sim \la k_1+k_2\ra\la k-k_1\ra$.
Also letting $n=k_1+k_2$ it suffices  to consider the following sum:
\begin{multline*}
 \sum_{k_1,n} \frac{\la k \ra^{2s}   }{  \la 2k-n\ra^2  \la k-n\ra^{ 2s_0}   \la n\ra^{2-2b} \la n-k_1\ra^{ 2s_0} \la k_1\ra^{ 2s_0}  \la k-k_1\ra^{2-2b}}\\ =\sum_{|n-2k|>\frac{|k|}{2}, \, k_1}+\sum_{|n-2k|\leq \frac{|k|}{2},\, k_1}=:S_1+S_2.
\end{multline*}
We have
\begin{align*}
S_1\lesssim \la k\ra^{2s-2}\sum_{n,k_1}\frac{1 }{ \la k-n\ra^{ 2s_0}   \la n\ra^{2-2b} \la n-k_1\ra^{ 2s_0} \la k_1\ra^{ 2s_0}  \la k-k_1\ra^{2-2b}}.
\end{align*}
Using $\max(\la k-n\ra^{ 2s_0} , \la n-k_1\ra^{ 2s_0})\gtrsim \la k-k_1\ra^{2s_0}$, and then part a) of Lemma~\ref{lem:sums} (recall that $2s_0+2-2b>1$), we have
\begin{align*}
S_1&\lesssim \la k\ra^{2s-2}\sum_{n,k_1}\frac{1 }{  \la n\ra^{2-2b} \min\big(\la k-n\ra^{ 2s_0}  ,\la n-k_1\ra^{ 2s_0}\big) \la k_1\ra^{ 2s_0}  \la k-k_1\ra^{2s_0+2-2b}}\\
&\lesssim \la k\ra^{2s-2}\sum_{k_1}\frac{1 }{   \la k_1\ra^{ 2s_0}  \la k-k_1\ra^{2s_0+2-2b}} \lesssim \la k\ra^{2s-2-2s_0}.
\end{align*}
In the case of $S_2$ we have
$$
\la n\ra, \la k-n\ra \gtrsim \la k\ra,
$$
and hence
\begin{align*}
S_2&\lesssim \la k \ra^{2s-2s_0-2+2b}  \sum_{|n-2k|\leq \frac{|k|}{2},\, k_1} \frac{ 1 }{  \la 2k-n\ra^2   \la n-k_1\ra^{ 2s_0} \la k_1\ra^{ 2s_0}  \la k-k_1\ra^{2-2b}}.
\end{align*}
Note that
$$
\max(\la n-k_1\ra^{ 2s_0}, \la k_1\ra^{ 2s_0})\gtrsim \la n\ra^{2s_0}\geq \la k\ra^{2s_0}.
$$
Thus,
\begin{align*}
S_2&\lesssim \la k \ra^{2s-4s_0-2+2b}  \sum_{|n-2k|\leq \frac{|k|}{2},\, k_1} \frac{ 1 }{  \la 2k-n\ra^2   \min(\la n-k_1\ra^{ 2s_0}, \la k_1\ra^{ 2s_0})  \la k-k_1\ra^{2-2b}}.
\end{align*}
Using part a) of Lemma~\ref{lem:sums} (noting that $|n-k|\gtrsim |k|$ and that $\la k\ra^{-\gamma}\phi_\beta(k)=\la k\ra^{-\beta}\phi_\gamma(k)$ if $0<\beta,\gamma<1$), we obtain
\begin{align*}
S_2 \lesssim \la k \ra^{2s-4s_0-2+2b}  \sum_{n} \frac{ 1 }{  \la 2k-n\ra^2 } \la k\ra^{-2+2b}\phi_{2s_0}(k)
 \lesssim \la k\ra^{2s-4s_0-4+4b}  \phi_{2s_0}(k).
\end{align*}
Note that $S_2$ is bounded in $k$ if $s\leq s_0+\min(1,2s_0)$.

\section{Proof of Proposition~\ref{prop:wavexsb}}

We first consider $R_3$. By using Cauchy Schwarz, the convolution structure, and then integrating in $\tau_1, \tau_2$ as in the proof of the previous proposition, it suffices to prove that
$$
\sup_j \sum_{j_1\neq 0,j_2 }^*\frac{\la j \ra^{2s} |j|^2\la j_1\ra^{-2s_1} \la j_2\ra^{-2s_0} \la j-j_1-j_2\ra^{-2s_0}   }{ \big||j|-\alpha (j_1+j_2)^2+\alpha (j-j_1-j_2)^2\big|^2    \la
|j|-|j_1|+\alpha(j-j_1-j_2)^2-\alpha j_2^2\ra^{2-2b}}<\infty.
$$
Recalling \eqref{denom234},  it suffices to prove that
$$
\sum_{j_1\neq 0,j_2 }\frac{\la j \ra^{2s}  \la j_1\ra^{-2s_1} \la j_2\ra^{-2s_0} \la j-j_1-j_2\ra^{-2s_0}   }{ \la j-2j_1-2j_2 \ra^2    \la
|j|-|j_1|+\alpha(j-j_1-j_2)^2-\alpha j_2^2\ra^{2-2b}}
$$
is bounded in  $j$. Letting $n=j-j_1-j_2$ and $m=j_2$, we rewrite the sum as
\be\label{sumwave}
\sum_{m, n }\frac{\la j \ra^{2s}  \la j-n-m\ra^{-2s_1}    }{ \la 2n-j \ra^2  \la m\ra^{ 2s_0} \la n\ra^{ 2s_0}  \la
\alpha n^2-\alpha m^2+|j|-|j-n-m|+\ra^{2-2b}}.
\ee
We note that a similar argument gives us the following sum for $R_4$:
\be\label{sumwave2}
\sum_{m, n }\frac{\la j \ra^{2s}  \la j-n-m\ra^{-2s_1}    }{ \la 2n-j \ra^2  \la m\ra^{ 2s_0} \la n\ra^{ 2s_0}  \la
\alpha n^2-\alpha m^2-|j|-|j-n-m|+\ra^{2-2b}}.
\ee
We note that, by symmetry, if we can prove that
\be\label{sumwave3}
\sum_{m, n }\frac{\la j \ra^{2s}  \la j-n-m\ra^{-2s_1}    }{ \la 2n-j \ra^2  \la m\ra^{ 2s_0} \la n\ra^{ 2s_0}  \la
\alpha n^2-\alpha m^2+j-|j-n-m|+\ra^{2-2b}}
\ee
is bounded in $j\neq 0$, then the boundedness of \eqref{sumwave} and \eqref{sumwave2} follow.

Case i) $-\frac12 <s_1<0$.\\
We rewrite \eqref{sumwave}  as
$$
\sum_{|n|\sim |m|\les |j|}  + \sum_{|n|\sim |m|\gg |j|}  + \sum_{\stackrel{|n|\ll |m|}{|j|\geq |m+n|}} +
\sum_{\stackrel{|n|\gg |m|}{|j|\geq |m+n|}}+\sum_{\stackrel{|n|\ll |m|}{|j|\leq |m+n|}}+\sum_{\stackrel{|n|\gg |m|}{|j|\leq |m+n|}}  =:S_1+S_2+S_3+S_4+S_5+S_6.
$$

For $S_1$ we have
$$
S_1\les \sum_{|n|\sim |m|\les |j|} \frac{\la j \ra^{2s-2s_1}      }{ \la 2n-j \ra^2  \la n\ra^{ 4s_0}  \la
j-|j-n-m|+\alpha n^2-\alpha m^2\ra^{2-2b}}\les \la j \ra^{2s-2s_1-\min(2,4s_0)}.
$$
In the second inequality we first summed in $m$ using part c) of Lemma~\ref{lem:sums}, and then in $n$ using part a) of the lemma.

For $S_2$ we have
$$
S_2\les \sum_{|n|\sim |m|\gg |j|} \frac{\la j \ra^{2s }      }{ \la n \ra^{2+4s_0+2s_1}  \la
j-|j-n-m|+\alpha n^2-\alpha m^2\ra^{2-2b}}\les \la j \ra^{2s-2s_1- 4s_0-1}.
$$
Again, we first summed in $m$ using part c) of Lemma~\ref{lem:sums}.

In the case of $S_3$ we have $|n|\ll|m|\les |j| $, and hence
\begin{multline*}
S_3\les \sum_{|n|\ll |m| \les|j| } \frac{\la j \ra^{2s -2s_1-2}      }{   \la n\ra^{ 4s_0}\la
j-|j-n-m|+\alpha n^2-\alpha m^2\ra^{2-2b} } \\ \les \sum_{|n| \les |j|} \frac{\la j \ra^{2s -2s_1-2}      }{   \la n\ra^{ 4s_0 } }\les \la j \ra^{2s -2s_1- 2 } \phi_{4s_0}(j)\les \la j \ra^{2s-2s_1-\min(2,4s_0)}.
\end{multline*}

In the case of $S_4$ we have
$$
\la 2n-j\ra +\la j-|j-n-m|+\alpha n^2-\alpha m^2\ra \gtrsim n^2.
$$
Since $\la 2n-j\ra\gtrsim n^2$ implies that $\la 2n-j\ra\gtrsim \la j\ra$, we have
\begin{multline*}
\frac1{\la2n-j\ra^2 \la j-|j-n-m|+\alpha n^2-\alpha m^2\ra^{2-2b}} \\ \les\frac1{\la j\ra^2 \la j-|j-n-m|+\alpha n^2-\alpha m^2\ra^{2-2b}}+\frac1{\la2n-j\ra^2\la n\ra^{4-4b}}.
\end{multline*}
Therefore we estimate
\begin{multline*}
S_4\les \sum_{|m|\ll |n| \les |j|} \frac{\la j \ra^{2s -2s_1-2}      }{   \la m\ra^{ 4s_0 } \la j-|j-n-m|+\alpha n^2-\alpha m^2\ra^{2-2b} } \\ +  \sum_{|m|\ll |n| \les |j|} \frac{\la j \ra^{2s -2s_1}      }{ \la 2n-j \ra^2   \la n\ra^{ 2s_0+4-4b} \la m\ra^{2s_0} }.
\end{multline*}
The first line above can be estimated as in $S_3$ switching the roles of $n$ and $m$. To estimate the second line first sum in $n$ using part a) of Lemma~\ref{lem:sums}, and then in $m$ to obtain
$$
\les \la j \ra^{2s -2s_1-\min(2,2s_0+4-4b)} \phi_{2s_0}(j) \les  \la j \ra^{2s -2s_1-\min(2,4s_0)}.
$$
In the case of $S_5$, we have
\be \nn
\la j-|j-n-m|+\alpha n^2-\alpha m^2\ra \sim \la m\ra^2,\,\,\,\,\,\,\,|m|\gtrsim |j|.
\ee

Therefore, noting that $2s_0+2s_1+4-4b>1$, we have
\begin{multline*}
S_5\les \sum_{|n|\ll |m| } \frac{\la j \ra^{2s  }      }{ \la 2n-j \ra^2  \la n\ra^{ 2s_0} \la m\ra^{ 2s_0+2s_1+4-4b} } \\ \les \sum_{n } \frac{\la j \ra^{2s}      }{ \la 2n-j \ra^2   \la n\ra^{ 4s_0+2s_1+3-4b} }\les \la j \ra^{2s -\min(2,4s_0+2s_1+3-4b)}.
\end{multline*}

In the case of $S_6$, we have
\be \label{brackn}
\la j-|j-n-m|+\alpha n^2-\alpha m^2\ra \sim \la n\ra^2,\,\,\,\,\,\,\,|n|\gtrsim |j|.
\ee
Therefore,
\begin{multline*}
S_6\les \sum_{|m|\ll |n| \gtrsim|j|} \frac{\la j \ra^{2s }      }{ \la 2n-j \ra^2  \la n\ra^{2s_0+2s_1+4-4b} \la m\ra^{ 2s_0}}  \\
 \les \sum_{|n|\gtrsim |j| } \frac{\la j \ra^{2s }   \phi_{2s_0}(n)   }{ \la 2n-j \ra^2
\la n\ra^{2s_0+2s_1+4-4b} }   \les \la j \ra^{2s -2s_0-2s_1-4+4b} \phi_{2s_0}(j).
\end{multline*}
In the last inequality we used $|n|\gtrsim |j|$ and then summed in $n$.

Case ii) $s_1\geq 0$.\\
We rewrite \eqref{sumwave}  as
$$
\sum_{|n|\les |m|}  + \sum_{|m| \ll |n| \ll |j| }  + \sum_{ |m|\ll |n| \gtrsim |j|}  =:S_1+S_2+S_3.
$$
In the case of $S_1$, we have $ |j|\leq |j-n-m|+|m+n|\les |j-n-m|+|m|$, and hence
$$
\la j-n-m\ra \la m\ra \gtrsim \la j\ra.
$$
Using this and noting that $s_0\geq s_1$, we have
\begin{multline*}
S_1\les \sum_{|n|\les |m| } \frac{\la j \ra^{2s-2s_1  }      }{ \la 2n-j \ra^2  \la n\ra^{ 4s_0-2s_1}   \la
j-|j-n-m|+\alpha n^2-\alpha m^2\ra^{2-2b}} \\ \les    \la j \ra^{2s-2s_1-\min(2,4s_0-2s_1)}.
\end{multline*}
In the last inequality we summed in $m$ using part c) of Lemma~\ref{lem:sums} and then in $n$ using part a) of the lemma.

In the case of $S_2$
we have
$$
S_2\les \sum_{|m|\ll |n| \ll |j| } \frac{\la j \ra^{2s -2-2s_1  }      }{     \la m\ra^{4s_0} \la
j-|j-n-m|+\alpha n^2-\alpha m^2\ra^{2-2b}}\les \la j \ra^{2s -2-2s_1  } \phi_{4s_0}(j).
$$

Note that in the case of $S_3$ we have \eqref{brackn}. Therefore
$$
S_3\les \sum_{|m|\ll |n| \gtrsim |j|} \frac{\la j \ra^{2s   }      }{ \la 2n-j \ra^2  \la n\ra^{ 2s_0+4-4b} \la m\ra^{2s_0}  \la j-n-m\ra^{2s_1}  }.
$$
If $s_0+s_1>1/2$, we sum in $m$ and then in $n$ using part a) of Lemma~\ref{lem:sums} to obtain
$$
S_3\les \sum_{ |n| \gtrsim |j|} \frac{\la j \ra^{2s-2s_0-4+4b   }     }{ \la 2n-j \ra^2    \la j-n \ra^{2s_1+\min(0,2s_0-1)-}  }\les \la j \ra^{2s-2s_0-4+4b -\min(2,2s_1,2s_1+2s_0-1)+  }.
$$
If $s_0+s_1\in(0,1/2]$, we have
$$
S_3 \les \sum_{ |n| \gtrsim |j|} \frac{\la j \ra^{2s }    \la n\ra^{1-2s_0-2s_1+}   }{ \la 2n-j \ra^2   \la n\ra^{ 2s_0+4-4b}    }\les \la j \ra^{2s-4s_0-2s_1-3+4b+   }.
$$
To estimate the second line

Note that each term above is bounded in $j$ if $s\leq s_1+\min(1,2s_0-s_1)$.

\section{Existence of Global Attractor}
In this section we prove   Theorem~\ref{thm:attractor}. As in the    previous sections  we drop the `$\pm$' signs and
work with the system:
\begin{equation}\label{eq:fdzakharov1n}
\left\{
\begin{array}{l}
 (i\partial_t+\alpha\partial_x^2+i\gamma)u =n  u+ f , \,\,\,\,  x \in {\mathbb T}, \,\,\,\,  t\in [-T,T],\\
 (i\partial_t -  d+i\gamma)n  =   d (|u|^2), \\
 u(x,0)=u_0(x) \in H^{1}(\mathbb T), \,\,\,\, 
n(x,0)=n_{0}(x) \in \dot L^2(\mathbb T).
\end{array}
\right.
\end{equation}

We start with a smoothing  estimate for \eqref{eq:fdzakharov1n} which implies the existence of a global attractor:
\begin{theorem}\label{thm:fdsmooth}
Consider the solution of \eqref{eq:fdzakharov1n} with initial data $(u_0,n_{ 0})\in H^{1}\times \dot L^2$.
Then, for $\frac1\alpha\not\in \N$, and for any $a <1 $, we have
\begin{align}\label{fdusmooth}
u(t)-e^{i\alpha t\partial_x^2-\gamma t}u_0 &\in C^0_tH_x^{1+ a }([0,\infty)\times \T  ),\\ n(t)-e^{- itd-\gamma t}n_{ 0}  &\in C^0_tH_x^{a}([0,\infty)\times \T  ).\label{fdnsmooth}
\end{align}
Moreover,
\be \label{fdgrowth1}
\|u(t)-e^{i\alpha t\partial_x^2-\gamma t}u_0\|_{ H^{1+a}} +\| n (t)-e^{-itd-\gamma t}n_{ 0}\|_{H^{ a}}  \\ \leq  C\big( a,\alpha, \gamma,\|f\|_{H_1},\| u_0\|_{H^{1}},\|n_{ 0}\|_{L^2}  \big).
\ee
In the case $ \alpha=1$ we have, for any $a <1 $,
\begin{multline}\label{fdgrowth2}
\Big\|u(t)-e^{i  t\partial_x^2-\gamma t}u_0+i\int_0^t e^{(i \partial_x^2  -\gamma) (t-t^\prime)}\rho_1 dt^\prime\Big\|_{ H^{1+a}}   +\big\| n(t)-e^{-itd-\gamma t}n_{ 0} \big\|_{H^{ a}}  \\ \leq C\big( a, \gamma,\|f\|_{H_1},\| u_0\|_{H^{1}},\|n_{ 0}\|_{L^2}  \big),
\end{multline}
where $\rho_1$ is as in Proposition~\ref{thm:dbp}. The analogous continuity statements as in \eqref{fdusmooth}, \eqref{fdnsmooth} are also valid.
\end{theorem}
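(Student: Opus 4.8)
The plan is to rerun the normal-form/differentiation-by-parts scheme behind Theorems~\ref{thm:main1} and \ref{thm:main2}, but now at the fixed energy regularity $(s_0,s_1)=(1,0)$ and with the two new features of \eqref{eq:fdzakharov1n} folded in: the linear damping $i\gamma$ and the time-independent forcing $f$. First I would pass to interaction variables adapted to the \emph{damped} flow, $v_k=e^{(i\alpha k^2+\gamma)t}u_k$ and $m_j=e^{(i|j|+\gamma)t}n_j$. A direct computation shows that the damping cancels from the linear part and reproduces the oscillatory system \eqref{eq:zakharov4}, except that every bilinear interaction now carries a net decaying factor $e^{-\gamma t}$, and the $v$-equation acquires the inhomogeneous term $e^{(i\alpha k^2+\gamma)t}f_k$. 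Differentiating by parts as in Proposition~\ref{thm:dbp} then produces the damped analogues of \eqref{new_u}--\eqref{new_n}, with Duhamel kernel $e^{(i\alpha\partial_x^2-\gamma)(t-s)}$; one new wrinkle is that differentiating the factor $e^{-\gamma t}$ generates extra terms proportional to $\gamma$ with exactly the boundary-term structure of $B_1,B_2$, and the higher-order remainders now carry $e^{-2\gamma t}$. These $\gamma$-multiples smooth at least as well as $B_1,B_2$, so they are harmless.

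The forcing is in fact the most benign term. Its direct Duhamel contribution to $u$ is $-if_k(1-e^{-(i\alpha k^2+\gamma)t})/(i\alpha k^2+\gamma)$, whose modulus is $\les|f_k|\langle k\rangle^{-2}$ because the denominator $i\alpha k^2+\gamma$ never vanishes (here $\gamma>0$ is essential) and grows like $k^2$. Hence the forcing gains two derivatives and lands in $H^3\subset H^{1+a}$ for every $a<1$; the forcing-times-field bilinear terms created when the $v$-equation is substituted during differentiation by parts smooth for the same reason and are estimated like the other remainders. For the boundary and remainder terms I would invoke the already-proved estimates verbatim at $(s_0,s_1)=(1,0)$: Lemma~\ref{apriori} gives $B_1,B_2\in H^2$ and $\rho_2\in H^1$, while Propositions~\ref{prop:schroxsb} and \ref{prop:wavexsb} (taking $b-\tfrac12$ small) place the $u$-remainders $R_1,R_2$ in $H^{1+a}$ and the $n$-remainders $R_3,R_4$ in $H^a$ for any $a<1$. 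Thus every piece exceeds the target spaces $H^{1+a}$ (for $u$) and $H^a$ (for $n$), and the only term that fails to smooth is the Schrödinger resonance $\rho_1\in H^1$; this is precisely why, in the case $\alpha=1$, it is retained as the explicit correction $i\int_0^t e^{(i\partial_x^2-\gamma)(t-s)}\rho_1\,dt'$ in \eqref{fdgrowth2}, while the wave resonance $\rho_2$ already reaches $H^1$ and needs no correction.

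The uniform-in-time bounds \eqref{fdgrowth1}--\eqref{fdgrowth2} then come from coupling the a priori bound \eqref{aprioribd} with the damping kernel. Since \eqref{aprioribd} keeps $\|u(t)\|_{H^1}+\|n(t)\|_{L^2}$ bounded for all $t>0$, the local existence time $\delta$ of Theorem~\ref{thm:fdlocal} is uniform, so I would slice $[0,t]$ into intervals $I_j=[j\delta,(j+1)\delta]$, bound the remainder contribution on each slice by the relevant $X^{s,b}$ and $Y^{s,b}$ norms (uniformly controlled by the local theory and \eqref{aprioribd}), and extract the factor $e^{-\gamma(t-s)}\les e^{-\gamma(t-(j+1)\delta)}$. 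Summing over $j\les t/\delta$ gives a geometric series $\sum_j e^{-\gamma(t-(j+1)\delta)}$ whose total is bounded by $\frac{e^{\gamma\delta}}{1-e^{-\gamma\delta}}$, independently of $t$. This is exactly how dissipation replaces the polynomial growth $T^{1+15\gamma+}$ of the conservative argument by a constant depending only on $a,\alpha,\gamma,\|f\|_{H^1}$ and the initial norms. Continuity as in \eqref{fdusmooth}--\eqref{fdnsmooth} follows as in Theorem~\ref{thm:main1}, from $X^{s,b}\subset C^0_tH^s_x$ ($b>\tfrac12$) and dominated convergence.

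\textbf{Main obstacle.} The delicate point is the uniform-in-$t$ summation over the slices: one must check that the $e^{-\gamma t}$ and $e^{-2\gamma t}$ factors created at each differentiation-by-parts step, together with the $\gamma$-multiples of boundary terms, all reassemble after reversing the change of variables into a clean kernel $e^{-\gamma(t-s)}$, so that the geometric sum closes globally and not merely slice-by-slice. Equally, in the resonant case $\alpha=1$ one must verify that subtracting $\rho_1$ alone captures the entire obstruction to smoothing, i.e., that the retained correction $i\int_0^t e^{(i\partial_x^2-\gamma)(t-s)}\rho_1\,dt'$ — itself controlled in $H^1$ through $\|\rho_1\|_{H^1}\les\|n\|_{L^2}\|u\|_{H^1}$ and \eqref{aprioribd} — leaves every other term strictly above the energy level.
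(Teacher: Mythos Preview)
Your proposal is correct and follows essentially the same route as the paper: the same damped interaction variables $v_k=e^{(i\alpha k^2+\gamma)t}u_k$, $m_j=e^{(i|j|+\gamma)t}n_j$, the same differentiation-by-parts producing $B_1,B_2,R_1,\dots,R_4$ plus the $\gamma$-multiples of $B_1,B_2$ and the forcing-induced terms $B_1(n,f),B_2(f,u),B_2(u,f)$, the same two-derivative gain on $f$ via $|i\alpha k^2+\gamma|^{-1}$, and the same slice-and-sum geometric argument $\sum_j e^{-(J-j)\delta\gamma}\les(1-e^{-\delta\gamma})^{-1}$ to upgrade the local bound to a $t$-uniform one. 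The separation of $\rho_1$ (but not $\rho_2$) in the resonant case $\alpha=1$ is also exactly what the paper does.
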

\begin{proof}
Writing
$$u(x,t)=\sum_{k } u_k(t) e^{ikx},\,\,\,\,\,n(x,t)=\sum_{j\neq 0} n_j(t) e^{ijx},\,\,\,\,\,f(x )=\sum_{k } f_k(t) e^{ikx}$$
we obtain the following system for the Fourier coefficients:
\begin{equation}\label{eq:fdzakharov3}
\left\{
\begin{array}{l}
 i\partial_t  u_k +(i\gamma- \alpha k^2) u_k =\sum_{k_1+k_2=k,\,k_1\neq 0} n_{k_1} u_{k_2} +  f_k, \\
 i\partial_t n_j +(i\gamma - |j|) n_j = |j| \sum_{j_1+j_2=j} u_{j_1}  \overline{u_{-j_2} }.
\end{array}
\right.
\end{equation}

We have the following  proposition which follows from differentiation by parts as in Proposition~\ref{thm:dbp}
by using the change of variables $m_j=n_je^{i|j|t+\gamma t}$, and $v_k=u_ke^{i\alpha k^2 t+\gamma t}$.
\begin{prop}\label{thm:dbpfd}
The system \eqref{eq:fdzakharov3} can be written in the following form:
\begin{multline}\label{fdv_eq_dbp}
i\partial_t\big[e^{it\alpha k^2+\gamma t}u_k\big]+ie^{-\gamma t}\partial_t\big[e^{it\alpha k^2+2\gamma t}B_1(n,u)_k\big] =\\ e^{it\alpha k^2+\gamma t}\big[\rho_1(k)+ f_k+ B_1(n,f)+ R_1(u)(\widehat k, t)+R_2(u,n)(\widehat k, t)\big],
\end{multline}
\begin{multline}
 i\partial_t \big[e^{it|j|+\gamma t}n_j\big] +ie^{-\gamma t} \partial_t \big[e^{it|j|+2\gamma t} B_2(u)_j\big]  =  \\  e^{it|j|+\gamma t}\big[ \rho_2(j)  +B_2(f,u)
+ B_2(u, f )+R_3(u,n)(\widehat j, t)+R_4(u,n)(\widehat j, t) \big]. \label{fdm_eq_dbp}
\end{multline}
where $B_i, \rho_i$, $i=1,2$, and $R_j$, $j=1,2,3,4$ are as in Proposition~\ref{thm:dbp}.
\end{prop}

Integrating \eqref{fdv_eq_dbp} from $0$ to $t$, we obtain
\begin{multline*}
u_k(t)-e^{-it\alpha k^2-\gamma t}u_k(0)=  -B_1(n,u)_k +
e^{-it\alpha k^2-\gamma t}B_1(n_0,u_0)_k +\\ \int_0^t e^{-(i\alpha k^2 +\gamma)(t-t^\prime)} \Big[ -\gamma B_1(n,u)_k-i\rho_1(k) -i f_k-i B_1(n,f)_k   \Big] dt^\prime \\
-i\int_0^t e^{-(i\alpha k^2 +\gamma)(t-t^\prime)} \big[ R_1(u)(\widehat k, t^\prime)+R_2(u,n)(\widehat k, t^\prime)\big] dt^\prime.
\end{multline*}
First note that
\be\label{fcont}
\Big\|\int_0^t e^{-(i\alpha k^2 +\gamma)(t-t^\prime)} f_k dt^\prime\Big\|_{H^{1+a}}=\Big\|\frac{f_k}{i\alpha k^2+\gamma} (1-e^{-it\alpha k^2-\gamma t}) \Big\|_{H^{1+a}} \les \|f\|_{H^{a-1}}.
\ee
In the case $\frac1\alpha\not \in \N$,  using \eqref{fcont}, the estimates in Lemma~\ref{apriori} and Proposition~\ref{prop:schroxsb} as above, and also using the growth bound in \eqref{aprioribd},  we obtain for any $a<1$
\begin{multline*}
\|u(t)-e^{i\alpha\partial_x^2 t-\gamma t} u_0\|_{H^{1+a}}\les \|f\|_{H^{a-1}}+ \big[\|f\|_{H^1}+\|n(0)\|_{L^2}+\|u(0)\|_{H^1}\big]^2 + \big[\|u\|_{X^{1,\frac12}_\delta}+\|n\|_{Y_\delta^{1,\frac12}}\big]^3.
\end{multline*}
Using the local theory bound for $X^{1,\frac12}_\delta, Y^{1,\frac12}_\delta$ norms for a
$\delta =\delta( \|n_0\|_{L^2}, \|u_0\|_{H^1},\|f\|_{H^1})$, we obtain for $t<\delta$
$$
\|u(t)-e^{i\alpha\partial_x^2 t-\gamma t} u_0\|_{H^{1+a}}\les C(a, \gamma, \|f\|_{H^1}, \|n_0\|_{L^2}+\|u_0\|_{H^1}).
$$
In the rest of the proof the implicit constants depend on $a, \gamma, \|f\|_{H^1}, \|n_0\|_{L^2}+\|u_0\|_{H^1}$.
Fix $t$ large, and $\delta$ as above.  We have
$$
\|u(j\delta)-e^{i\alpha\partial_x^2 \delta-\gamma \delta}u((j-1)\delta)\|_{H^{1+a}}\lesssim 1,
$$
for any $j\leq t/\delta$.
Using this we obtain (with $J=t/\delta$)
\begin{align*}
&\|u(J\delta)-e^{J\delta(i\alpha\partial_x^2 -\gamma)}u(0)\|_{H^{1+a}} \leq \sum_{j=1}^J\|e^{(J-j)\delta(i\alpha\partial_x^2 -\gamma ) }u(j\delta)-e^{(J-j+1)\delta(i\alpha\partial_x^2 -\gamma )}u((j-1)\delta)\|_{H^{1+a}}\\
&= \sum_{j=1}^Je^{-(J-j) \delta \gamma}\| u(j\delta)-e^{ \delta (i\alpha\partial_x^2 -\gamma)}u((j-1)\delta)\|_{H^{1+a}}\lesssim  \sum_{j=1}^Je^{-(J-j) \delta \gamma} \lesssim \frac{1}{1-e^{-\delta\gamma}}.
\end{align*}

In the case $ \alpha=1$, we have to separate the resonant term in this argument.
We have the following inequality for $t<\delta$
$$
\Big\|u(t)-e^{i\alpha\partial_x^2 t-\gamma t} u_0+i\int_0^t e^{(i\alpha\partial_x^2  -\gamma) (t-t^\prime)}\rho_1 dt^\prime\Big\|_{H^{1+a}}\les C(a, \gamma, \|f\|_{H^1}, \|n_0\|_{L^2}+\|u_0\|_{H^1}).
$$
Accordingly we have
\begin{multline*}
 \Big\|u(J\delta)-e^{J\delta(i\alpha\partial_x^2 -\gamma)}u(0)+\int_0^{J\delta}e^{(i\alpha\partial_x^2  -\gamma) (J\delta-t^\prime)}\rho_1 dt^\prime\Big\|_{H^{1+a}} \leq \\ \sum_{j=1}^J\Big\|e^{(J-j)\delta(i\alpha\partial_x^2 -\gamma ) }\Big( u(j\delta)-e^{ \delta(i\alpha\partial_x^2 -\gamma )}u((j-1)\delta)+i  \int_{(j-1)\delta}^{j\delta}e^{(i\alpha\partial_x^2  -\gamma) (j\delta-t^\prime)}\rho_1 dt^\prime\Big)\Big\|_{H^{1+a}}= \\
 \sum_{j=1}^Je^{-(J-j) \delta \gamma}\Big\| u(j\delta)-e^{ \delta (i\alpha\partial_x^2 -\gamma)}u((j-1)\delta)+i  \int_{(j-1)\delta}^{j\delta}e^{(i\alpha\partial_x^2  -\gamma) (j\delta-t^\prime)}\rho_1 dt^\prime\Big\|_{H^{1+a}}\lesssim   \\
\sum_{j=1}^Je^{-(J-j) \delta \gamma} \lesssim \frac{1}{1-e^{-\delta\gamma}}.
\end{multline*}
The corresponding inequalities for the wave part follow similarly. The only difference is that we don't need to separate the resonant term, since $\rho_2\in H^1$ by Lemma~\ref{apriori}.

This completes the proof of the global bound stated in Theorem~\ref{thm:fdsmooth}. Finally the continuity in  in $H^{1}\times \dot L^2$ follows as in \cite{et}. We omit the details.
\end{proof}
\begin{proof}[Proof of Theorem~\ref{thm:attractor}]
We start with the case $\frac1\alpha \not \in \N$. 
First of all note that the existence of an absorbing set, $\mathcal B_0\subset H^1\times \dot L^2$, is immediate from \eqref{aprioribd}. Second, we need to verify the asymptotic compactness of the propagator $U_t$. It suffices to prove that for any sequence $t_r\to\infty$ and for any sequence $(u_{0,r},n_{0,r})$ in $ \mathcal B_0$, the sequence
$U_{t_r}(u_{0,r},n_{0,r})$ has a convergent subsequence in $H^1\times \dot L^2$.

To see this note that by Theorem~\ref{thm:fdsmooth}, (if $(u_{0},n_{0})\in \mathcal B_0$)
$$U_t\big(u_{0},n_{0}\big)=\big(e^{i\alpha t\partial_x^2-\gamma t}u_{0}, e^{- itd-\gamma t} n_{0} \big)+ N_t\big(u_{0},n_{0}\big)$$
where $N_t\big(u_{0},n_{0}\big)$ is in a ball in $H^{1+a}\times H^a$ with radius depending on $a\in(0,1), \alpha, \gamma,$ and $ \|f\|_{H^1}$. By Rellich's theorem, $\{N_t\big(u_{0},n_{0}\big):t>0, (u_{0},n_{0})\in \mathcal B_0\}$ is precompact in $H^1\times  \dot L^2$. Since
$$ \big\|\big(e^{i\alpha t \partial_x^2-\gamma t }u_{0}, e^{- itd-\gamma t} n_{0} \big)\big\|_{H^1\times \dot L^2}\les e^{-\gamma t}\to 0,\,\,\,\,\text{ as } t\to \infty,$$
uniformly on $\mathcal B_0$, we conclude that $\{U_{t_r}\big(u_{0,r},n_{0,r}\big):r\in \N\}$ is precompact in $H^1\times  \dot L^2$. Thus, $U_t$ is asymptotically compact.  This and Theorem A imply the existence of a global attractor $\mathcal A\subset H^1\times \dot L^2$.

We now prove that the attractor set $\mathcal A$ is a compact subset of $H^{1+a}\times H^a$ for any $a\in(0,1)$. By Rellich's theorem, it suffices to prove that for any $a\in(0,1)$, there exists a closed ball $B_a\subset  H^{1+a}\times H^a$  of radius $C(a,\alpha, \gamma,\|f\|_{H^1})$ such that  $\mathcal A\subset B_a$.
By definition
$$
\mathcal A=\bigcap_{\tau\geq 0}\overline{\bigcup_{t\geq \tau} U_t\mathcal B_0}=: \bigcap_{\tau\geq 0} V_\tau.
$$
By Theorem~\ref{thm:fdsmooth} and the discussion above, $V_\tau$ is contained in a $\delta_\tau$ neighborhood, $N_\tau$, of a ball $B_a$ in $H^1\times \dot L^2$ whose radius depends only on $a,\alpha,\gamma,\|f\|_{H^1}$, and where   $\delta_\tau\to 0$ as $\tau$ tends to infinity. Since   $B_a$ is a compact subset of
$H^1\times \dot L^2$, we have
$$
\mathcal A = \bigcap_{\tau\geq 0} V_\tau\subset \bigcap_{\tau>0} N_\tau =B_a.
$$

Now consider the case $\frac1\alpha\in \N$. For simplicity, we take $\alpha=1$. 
We have to be slightly more careful in this case because of the contribution of the resonant term, $\rho_1$, which is does not belong to $H^{1+a}$ for any $a>0$.  Recall that,   by Theorem~\ref{thm:fdsmooth},  for $(u_0,n_0)\in\mathcal B_0$
\be \label{reson_evol}
U_t\big(u_{0},n_{0}\big)=\big(e^{i\alpha t\partial_x^2-\gamma t}u_{0}, e^{- itd-\gamma t} n_{0} \big)+ N_t\big(u_{0},n_{0}\big)
+  i\Big(\int_0^t e^{(i \partial_x^2  -\gamma) (t-t^\prime)}\rho_1 dt^\prime, 0\Big),
\ee
where $N_t\big(u_{0},n_{0}\big)$ is in a ball in $H^{1+a}\times H^a$ with radius depending on $a\in(0,1), \gamma,$ and $ \|f\|_{H^1}$.
Recall from Proposition~\ref{thm:dbp}, that the Fourier coefficients of $\rho_1$   are
$$
(\rho_1)_k  = \rho_1(n ,u )_k=n_{2k-\sgn(k)} u_{ \sgn(k)-k},\,\,k\neq 0.
$$
In light of the proof of the case $\frac1\alpha\not\in\N$ above, it suffices  to consider the contribution of the resonant term under the assumption that $(u_0,n_0)\in\mathcal B_0$.
Using \eqref{reson_evol}, we write
\be\label{rho1bd}
\rho_1\big(n(t^\prime),u(t^\prime)\big)=\rho_1\big(e^{- it^\prime d-\gamma t^\prime} n_{0} , u(t^\prime)\big) + \rho_1\big(N_{t^\prime} (n_{0}), u(t^\prime)\big).
\ee
Now note that, by Lemma~\ref{apriori}, we have
$$
\big\|\rho_1(n ,u ) \big\|_{H^{1+a}}\les \|n\|_{H^a}  \|u\|_{H^1}. 
$$ 
Using this with $a=0$, we see that the contribution of the first summand in \eqref{rho1bd} to the resonant term in \eqref{reson_evol} satisfies
\begin{multline*}
\Big\|\int_0^t e^{(i \partial_x^2  -\gamma) (t-t^\prime)}\rho_1\big(e^{- it^\prime d-\gamma t^\prime} n_{0} , u(t^\prime)\big) dt^\prime\Big\|_{H^1} \les \int_0^t e^{ -\gamma  (t-t^\prime)}\|e^{- it^\prime d-\gamma t^\prime} n_{0} \|_{L^2} \|u(t^\prime)\|_{H^1} dt^\prime \\ \leq  t e^{-\gamma t} C(a,\gamma,\|f\|_{H^1}),
\end{multline*}
which goes to zero uniformly in $\mathcal B_0$. Similarly, the contribution of the second summand in \eqref{rho1bd} to the resonant term in \eqref{reson_evol} satisfies
\begin{multline*}
\Big\|\int_0^t e^{(i \partial_x^2  -\gamma) (t-t^\prime)}\rho_1\big(N_{t^\prime} (n_{0}), u(t^\prime)\big) dt^\prime\Big\|_{H^{1+a}} \les \int_0^t e^{ -\gamma  (t-t^\prime)}\|N_{t^\prime} (n_{0}) \|_{H^a} \|u(t^\prime)\|_{H^1} dt^\prime \\ \leq C(a,\gamma,\|f\|_{H^1}).
\end{multline*}
The rest of the proof is same as the case $\frac1\alpha\not\in\N$.
\end{proof}

\section{Appendix}
We prove Lemma~\ref{lem:sums}. Note that, with $m=k_2-k_1$, we can rewrite the sum in part a) as
$$
\sum_n\frac1{\la n\ra^\beta \la n-m\ra^\gamma}.
$$
For $|n|<|m|/2$, we estimate the sum by
$$
\sum_{|n|<|m|/2} \frac1{\la n\ra^\beta \la m\ra^\gamma} \leq \la m\ra^{-\gamma}\phi_\beta(m).
$$
For $|n|>2|m|$, we estimate by
$$
\sum_{|n|>2|m|} \frac1{\la n\ra^{\beta+\gamma}} \les \la m\ra^{1-\beta-\gamma}\les \la m\ra^{-\gamma} \phi_\beta(m).
$$
Finally for  $|n|\sim|m|$, we estimate by
$$
\sum_{|n|\sim |m|} \frac{1}{\la m\ra^\beta \la n-m\ra^{\gamma}}\les \la m\ra^{-\beta}\phi_\gamma(m)\les \la m\ra^{-\gamma}\phi_\beta(m).
$$
The last inequality follows from the definition of $\phi_\beta$ and the hypothesis $\beta\geq \gamma$.

The part b) follows from part a). To obtain part c), write
$$
|n^2+c_1n+c_2|=|(n+z_1)(n+z_2)|\geq |n+x_1| |n+x_2|
$$
where $x_i$ is the real part of $z_i$. The contribution of the terms $|n+x_1|<1$ or $|n+x_2|<1$
is $\les 1$. Therefore, we estimate the sum in part c) by
$$
\les 1+ \sum_n\frac{1}{\la n+x_1\ra^\beta \la n+x_2\ra^\beta}\les 1
$$
by part a).

 \end{document}